\documentclass[11pt,a4paper]{article}

\usepackage[T1]{fontenc}
\usepackage[utf8]{inputenc}
\usepackage{lmodern}
\usepackage[a4paper,margin=27mm]{geometry}
\usepackage{microtype}
\usepackage{amsmath,amssymb,amsthm,mathtools}
\usepackage{booktabs,longtable,tabularx,array}
\usepackage{enumitem}
\usepackage{graphicx}
\usepackage{tikz}
\usepackage{tikz-cd}
\usepackage{forest}
\usepackage{caption}
\usepackage{subcaption}
\usepackage{xcolor}
\usepackage{fancyhdr}
\usepackage{setspace}
\usepackage{csquotes}
\usepackage{hyperref}
\usepackage[nameinlink,noabbrev]{cleveref}

\hypersetup{
  colorlinks=true,
  linkcolor=black,
  citecolor=black,
  urlcolor=black,
  pdftitle={Repunit Coordinates for the Erdős Ternary-Digit Problem},
  pdfauthor={Michael A. Idowu}
}

\setlist[itemize]{leftmargin=1.6em,itemsep=0.2em,topsep=0.3em}
\setlist[enumerate]{leftmargin=1.8em,itemsep=0.2em,topsep=0.3em}
\newtheorem{theorem}{Theorem}[section]
\newtheorem{lemma}[theorem]{Lemma}
\newtheorem{proposition}[theorem]{Proposition}
\newtheorem{corollary}[theorem]{Corollary}
\theoremstyle{definition}
\newtheorem{definition}[theorem]{Definition}
\newtheorem{example}[theorem]{Example}
\newtheorem{problem}[theorem]{Problem}
\theoremstyle{remark}
\newtheorem{remark}[theorem]{Remark}

\newcommand{\Z}{\mathbb Z}
\newcommand{\Zthree}{\mathbb Z_3}
\newcommand{\Nzero}{\mathbb N_0}
\newcommand{\Rep}{R}
\newcommand{\vthree}{v_3}
\newcommand{\Cantor}{\Sigma_{3,\{0,1\}}}

\newcommand{\Adm}{\mathcal A}
\newcommand{\cell}{\mathcal B}
\newcommand{\tern}[1]{(#1)_3}

\title{\textbf{Repunit Coordinates for the Erdős Ternary-Digit Problem}}
\author{Michael A. Idowu}
\date{}

\begin{document}
\maketitle

\begin{abstract}
Paul Erd\H{o}s conjectured that the only powers of two whose ternary expansions contain no digit \(2\) are \(1\), \(4\), and \(256\). Despite extensive computational verification and substantial structural work, the conjecture remains open. This paper develops a self-contained reformulation based on generalised base-\(4\) repunits. Any admissible exponent is even, say \(n=2s\), and
\(
2^{2s}=4^s=3\Rep_s(4)+1,
\qquad
\Rep_s(4)=\frac{4^s-1}{3}.
\)
Thus \(2^{2s}\) omits the ternary digit \(2\) if and only if \(\Rep_s(4)\) does.
For every depth \(m\), we prove that
\(
\Phi_m:s\pmod{3^m}\longmapsto \Rep_s(4)\pmod{3^m}
\)
is a permutation of the residue classes modulo \(3^m\). Consequently, every prescribed \(m\)-digit ternary suffix has a unique canonical exponent seed modulo \(3^m\), and all exponents producing that suffix form an arithmetic progression. The central lifting identity
\(
\Rep_{r+q3^m}(4)
\equiv
\Rep_r(4)+q3^m
\pmod{3^{m+1}}
\)
shows that every residue class has exactly three lifts at the next depth and that exactly two extend an admissible suffix using only the digits \(0\) and \(1\). This places the trailing-digit recursion in an explicit repunit-coordinate framework and leads naturally to canonical seed fibres and an inverse-limit description.
Passing to the \(3\)-adic integers, the repunit map extends to an isometric homeomorphism that conjugates translation \(x\mapsto x+1\) with the affine recurrence \(y\mapsto4y+1\). The Erd\H{o}s problem is thereby separated into two distinct components: an exact and abundant finite-depth \(3\)-adic structure, and the unresolved global problem of determining which ordinary non-negative integer exponents satisfy the digit restriction at every ternary position. Worked examples, canonical seed tables, reproducible finite-depth datasets, and an explicit search protocol are included. No proof of the Erd\H{o}s conjecture is claimed.
\end{abstract}

\noindent\textbf{Keywords:} Erd\H{o}s conjecture; ternary digits; powers of two; generalised repunits; $3$-adic integers; residue classes; lifting; polynomial screening; digit omission.

\newpage

\section{Introduction}

In a short 1979 article, Erd\H{o}s conjectured that for $k>8$ the number $2^k$ cannot be represented as a sum of distinct powers of $3$, while recording the exceptional identity
\[
2^8=256=3^5+3^2+3+1.
\]
Equivalently, the base-$3$ expansion of $2^k$ contains no digit $2$ only for the known exponents $k=0,2,8$ \cite{Erdos1979}. The weaker formulation asks merely whether there are finitely many such exponents; this is the form emphasised in later dynamical and $3$-adic work \cite{Lagarias2009}. The problem is still open in 2026 \cite{Bloom406}.

The problem sits at an unusual interface. It is elementary to state, and any fixed exponent is easy to test, yet an argument excluding all future exponents has resisted both computation and known distribution theory. Gupta verified the conjecture through $n=4373$ \cite{Gupta1978}; later work substantially enlarged the computational range. In particular, Saye developed a recursive trailing-digit construction and verified the conjecture for all
\(
n\leq 2\cdot3^{45}\approx5.9\times10^{21}
\)
\cite{Saye2022}. \\
Dimitrov and Howe proved that among powers of two expressible as sums of at most twenty-five distinct powers of three, only the three known examples occur \cite{DimitrovHowe2025}. Lagarias developed real and $3$-adic dynamical formulations and connected the problem to Cantor-set intersections \cite{Lagarias2009}; Abram and Lagarias subsequently studied finite intersections of multiplicative translates of the $3$-adic Cantor set through finite automata and Hausdorff dimension \cite{AbramLagarias2014}. Dupuy and Weirich obtained average digit-equidistribution results whose non-averaged analogue would imply the weak Erd\H{o}s conjecture \cite{DupuyWeirich2016}. Aliyev has also analysed visible regularities in ternary digit blocks of powers of two \cite{Aliyev2023}.

The purpose of the present paper is different from another large computation and different from a claim of resolution. It develops a coordinate system in which the local arithmetic of the problem becomes exact and recursive. The presentation is deliberately self-contained: the repunit identities, congruence expansions, residue permutations, canonical seeds and arithmetic fibres are established from first principles before they are used. The resulting framework can therefore be read, checked and reproduced from the present paper alone.

The key observation is that the Erd\H{o}s condition is naturally a \emph{base-$4$ repunit condition}. Once this bridge is made, the repunit block identity and its $3$-adic truncations become directly applicable. The resulting finite-depth maps are not merely convenient: they are permutations of the residue rings $\Z/3^m\Z$. This leads to canonical seeds, disjoint fibres and an exact lift law. At the inverse-limit level, the finite residue maps extend to a natural 3-adic coordinate on the exponent space. Taken together, these results completely characterise finite-depth suffix admissibility in the repunit coordinate: every prescribed suffix has a unique exponent class, the admissible classes form an exact binary lifting tree, and the compatible finite maps assemble into an isometric \(3\)-adic coordinate. They do not, however, resolve the ordinary-integer problem. The unresolved Erdős conjecture asks which non-negative integers satisfy the digit restriction throughout the complete ternary expansion.

\subsection{Scope and novelty boundary}

The contribution of this paper is the following synthesis:
\begin{itemize}
    \item an exact reduction of the Erd\H{o}s ternary-digit condition to digit omission for the generalised repunit $R_s(4)$;
    \item a canonical seed-fibre stratification of exponent classes through the permutation
    \[
    \Phi_m(s)=R_s(4)\pmod{3^m};
    \]
    \item a repunit-coordinate formulation of the one-digit lifting law, in which each depth-$m$ class has three lifts and exactly two preserve admissibility;
    \item an inverse-limit formulation in which the compatible maps $\Phi_m$ assemble into an isometric homeomorphism of $\mathbb{Z}_3$ conjugating $x\mapsto x+1$ with $y\mapsto 4y+1$.
\end{itemize}

Taken together, these results provide a unified repunit-coordinate description of finite-depth admissibility, canonical inverse fibres, and the associated $3$-adic dynamics.

Several ingredients used in establishing this framework are classical or already present in the literature. Geometric-series identities, lifting-the-exponent arguments, multiplicative orders, and the $3$-adic Cantor set are not new. Most importantly, the recursive fact that there are three exponent lifts at each new ternary digit and that exactly two avoid the digit $2$ is already embodied in Saye's construction~\cite{Saye2022}. The contribution here is therefore not the discovery of that recursion itself, but its integration into a repunit-coordinate framework that also exposes the finite-level permutation structure, canonical seed fibres, and the compatible inverse-limit conjugacy on $\mathbb{Z}_3$.

A finite-depth residue tree, no matter how deep, cannot by itself prove finiteness of ordinary integer solutions, because the inverse-limit tree contains infinitely many $3$-adic paths. Any complete proof must therefore introduce a genuinely global mechanism beyond finite-depth suffix admissibility.

\begin{example}[Local admissibility versus global failure]
A small example already displays the distinction between the exact local
theory and the unresolved global problem. Take
\[
s=12.
\]
Then
\[
\Rep_{12}(4)
=
\frac{4^{12}-1}{3}
=
5\,592\,405
=
(101112010100010)_3.
\]
Modulo \(3^3=27\),
\[
\Rep_{12}(4)\equiv3=(010)_3.
\]
Thus the three-digit ternary suffix is \(010\), so all three visible
digits are admissible. Nevertheless, the complete ternary
expansion contains the digit \(2\).

Hence \(s=12\) survives the depth-\(3\) local test but is not a solution
of the Erd\H{o}s problem. This simple example captures the novelty
boundary of the present framework: the residue theory classifies local
admissibility exactly, but a separate global argument is needed to
exclude ordinary integers whose forbidden digit occurs further to the
left.
\end{example}

\section{Definitions and the Erd\H{o}s challenge}

\begin{definition}[Ternary digit-omission set]
Let
\[
\Cantor^{\mathrm{fin}}
:=\left\{\sum_{j=0}^{L-1}\varepsilon_j3^j:
L\geq0,\ \varepsilon_j\in\{0,1\}\right\}\subset\Nzero.
\]
Thus $a\in\Cantor^{\mathrm{fin}}$ exactly when the finite ternary expansion of $a$ omits the digit $2$.
\end{definition}

\begin{problem}[Sharp Erd\H{o}s ternary-digit problem]
Determine all $n\in\Nzero$ for which
\[
2^n\in\Cantor^{\mathrm{fin}}.
\]
The conjectured answer is $n\in\{0,2,8\}$ \cite{Erdos1979,Saye2022}.
\end{problem}

\begin{problem}[Weak finiteness form]
Prove that the set
\[
\{n\in\Nzero:2^n\in\Cantor^{\mathrm{fin}}\}
\]
is finite \cite{Lagarias2009,DupuyWeirich2016}.
\end{problem}

The distinction is useful. A proof that sufficiently large powers always contain a ternary digit $2$ would establish the weak form without identifying the exact exceptional set; a complete proof of the original sharp form must additionally eliminate every exponent above $8$.

\subsection{Generalised repunits and the arithmetic toolkit}

For an integer base $b>1$ and $s\geq0$, define
\[
\Rep_0(b)=0,
\qquad
\Rep_s(b)=\frac{b^s-1}{b-1}=1+b+\cdots+b^{s-1}\quad(s\geq1).
\]
Everything about repunits used later follows from this finite geometric sum. We record the required facts explicitly so that the later development is self-contained.

\begin{proposition}[Recurrence, addition, composition and blocks]\label{prop:repunit-toolkit}
For integers $b>1$ and non-negative integers $u,v,n$, one has
\begin{align}
\Rep_{n+1}(b)&=b\Rep_n(b)+1, \label{eq:recurrence}\\
\Rep_{u+v}(b)&=\Rep_u(b)+b^u\Rep_v(b), \label{eq:addition}
\end{align}
and, for $u\ge1$ and $v\ge0$,
\begin{equation}
\Rep_{uv}(b)=\Rep_u(b)\Rep_v(b^u). \label{eq:composition}
\end{equation}
Consequently, whenever $a\ge1$ and $n=aq+r$ with $q\ge0$ and $0\le r<a$,
\begin{equation}
\Rep_{aq+r}(b)=\Rep_a(b)\Rep_q(b^a)+b^{aq}\Rep_r(b).
\label{eq:block}
\end{equation}
\end{proposition}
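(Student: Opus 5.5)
We will work directly from the finite geometric-sum definition $\Rep_s(b)=\sum_{j=0}^{s-1}b^j$ (with the empty sum for $s=0$), so that every identity reduces to splitting or regrouping a finite sum of powers of $b$. The alternative closed form $(b^s-1)/(b-1)$ can be used as a cross-check, since $b-1\neq0$, but the sum form avoids any division.

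\emph{Recurrence and addition.} For \eqref{eq:addition}, split $\sum_{j=0}^{u+v-1}b^j$ into the terms with $j<u$, which give $\Rep_u(b)$, and the terms with $u\le j<u+v$. Reindexing the latter by $j=u+i$ gives $b^u\sum_{i=0}^{v-1}b^i=b^u\Rep_v(b)$. The degenerate cases $u=0$ or $v=0$ are immediate from $\Rep_0(b)=0$ and $b^0=1$. The recurrence \eqref{eq:recurrence} then follows by taking $u=1$ and $v=n$: since $\Rep_1(b)=1$, we get $\Rep_{n+1}(b)=1+b\Rep_n(b)$.

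\emph{Composition.} For \eqref{eq:composition} with $u\ge1$, group the $uv$ exponents $j\in\{0,\dots,uv-1\}$ by writing uniquely $j=uk+i$ with $0\le k<v$ and $0\le i<u$. Then
\[
\Rep_{uv}(b)=\sum_{k=0}^{v-1}\sum_{i=0}^{u-1}b^{uk}b^{i}=\Bigl(\sum_{i=0}^{u-1}b^i\Bigr)\Bigl(\sum_{k=0}^{v-1}(b^u)^k\Bigr)=\Rep_u(b)\Rep_v(b^u).
\]
Here $b^u>1$, so $\Rep_v(b^u)$ is defined in the same sense. Alternatively, one can induct on $v$ using \eqref{eq:addition} with the split $u(v+1)=uv+u$. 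The only point needing care is the uniqueness of the division-with-remainder decomposition of $j$, which guarantees that the double sum is a bijective regrouping.

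\emph{Block identity.} For \eqref{eq:block}, apply \eqref{eq:addition} with the split $n=aq+r$ to get $\Rep_{aq+r}(b)=\Rep_{aq}(b)+b^{aq}\Rep_r(b)$. Then apply \eqref{eq:composition} with $u=a\ge1$ and $v=q$ to get $\Rep_{aq}(b)=\Rep_a(b)\Rep_q(b^a)$. No step is a genuine obstacle. The main thing to watch is bookkeeping of the boundary cases $v=0$, $q=0$ and $r=0$, where the empty-sum convention $\Rep_0=0$ must be invoked explicitly.
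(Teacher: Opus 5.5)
Your proof is correct and follows the paper's argument: you split the geometric sum for \eqref{eq:addition}, regroup the exponents $j=uk+i$ for \eqref{eq:composition}, and combine the two for \eqref{eq:block}. The only cosmetic difference is that you obtain \eqref{eq:recurrence} as the case $u=1$, $v=n$ of \eqref{eq:addition}, whereas the paper multiplies the sum for $\Rep_n(b)$ by $b$ and adjoins the constant term directly.
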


\begin{proof}
The recurrence follows by multiplying the geometric sum for $\Rep_n(b)$ by $b$ and adjoining the missing constant term. Splitting the sum for $\Rep_{u+v}(b)$ after its first $u$ terms gives
\[
\sum_{j=0}^{u+v-1}b^j
=\sum_{j=0}^{u-1}b^j+b^u\sum_{j=0}^{v-1}b^j,
\]
which proves \eqref{eq:addition}. For composition,
\[
\Rep_u(b)\Rep_v(b^u)
=\left(\sum_{i=0}^{u-1}b^i\right)
 \left(\sum_{j=0}^{v-1}b^{uj}\right)
=\sum_{j=0}^{v-1}\sum_{i=0}^{u-1}b^{uj+i}.
\]
Every exponent from $0$ to $uv-1$ occurs exactly once, proving \eqref{eq:composition}. Finally, apply \eqref{eq:addition} to $aq+r$ and then \eqref{eq:composition} to $aq$.
\end{proof}

\begin{theorem}[Expansion about $1$ and finite $p$-adic truncation]\label{thm:padic-toolkit}
For every integer $s\ge1$,
\begin{equation}
\Rep_s(1+x)=\sum_{j=0}^{s-1}\binom{s}{j+1}x^j.
\label{eq:poly}
\end{equation}
More generally, if $p$ is prime, $e\ge1$ and $b=1+cp^e$ with $c\in\Z$, then for every $m\ge1$,
\begin{equation}
\Rep_s(b)\equiv
\sum_{j=0}^{m-1}\binom{s}{j+1}(cp^e)^j
\pmod{p^{em}},
\label{eq:generaltrunc}
\end{equation}
where $\binom{s}{j+1}=0$ when $j+1>s$.
\end{theorem}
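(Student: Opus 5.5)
The plan is to obtain \eqref{eq:poly} as a polynomial identity in $x$ and then to specialise it to $x=cp^e$, discarding the high-order terms modulo $p^{em}$.

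For \eqref{eq:poly}, I would work in $\Z[x]$. By definition $\Rep_s(1+x)\cdot x=(1+x)^s-1$, since $\Rep_s(b)(b-1)=b^s-1$ holds as an identity of polynomials in $b$ (the telescoping of the geometric sum $1+b+\cdots+b^{s-1}$). Expanding by the binomial theorem gives
\[
(1+x)^s-1=\sum_{k=1}^{s}\binom{s}{k}x^k=x\sum_{j=0}^{s-1}\binom{s}{j+1}x^j .
\]
Since $\Z[x]$ is an integral domain, we may cancel the factor $x$. This yields \eqref{eq:poly} as a polynomial identity, valid in particular for every integer value of $x$. A direct alternative is to expand each $(1+x)^i$ for $0\le i\le s-1$ and use the hockey-stick identity $\sum_{i=j}^{s-1}\binom{i}{j}=\binom{s}{j+1}$. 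The cancellation argument is shorter, so I would use it.

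For \eqref{eq:generaltrunc}, substitute $x=cp^e$ into \eqref{eq:poly} to get the exact integer identity
\[
\Rep_s(b)=\sum_{j=0}^{s-1}\binom{s}{j+1}(cp^e)^j .
\]
Every term with $j\ge m$ is divisible by $p^{ej}$ and hence by $p^{em}$, because the binomial coefficients are integers. Reducing modulo $p^{em}$ leaves the terms $0\le j\le \min(m,s)-1$.

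The only point needing care is the case $m>s$. Here the convention $\binom{s}{j+1}=0$ for $j+1>s$ makes the truncated sum to $m-1$ coincide with the full sum to $s-1$. The congruence is then in fact an equality. Nothing here looks like a genuine obstacle: the proof is a routine combination of the binomial theorem and divisibility of high powers. The only subtlety is justifying the division by $x$, and doing it in $\Z[x]$ avoids any issue at $x=0$.
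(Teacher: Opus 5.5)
Your proposal is correct and follows the paper's proof essentially step for step: expand $(1+x)^s-1$ by the binomial theorem, divide by $x$, substitute $x=cp^e$, and discard the terms with $j\ge m$ as multiples of $p^{em}$. Two of your points tighten the paper's one-line ``dividing by $x$'': you justify the division as cancellation in the integral domain $\Z[x]$, which also covers $c=0$, where $b=1$ lies outside the paper's standing assumption $b>1$; and you note that when $m>s$ the congruence is actually an equality.
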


\begin{proof}
The binomial theorem gives
\[
(1+x)^s-1=\sum_{k=1}^{s}\binom{s}{k}x^k.
\]
Dividing by $x$ and writing $j=k-1$ proves \eqref{eq:poly}. Substituting $x=cp^e$ gives an exact finite sum. Every omitted term with $j\ge m$ contains the factor $p^{ej}$ and is therefore divisible by $p^{em}$, proving \eqref{eq:generaltrunc}.
\end{proof}

For $b=4=1+3$, \cref{thm:padic-toolkit} becomes
\begin{equation}
\Rep_s(4)=\sum_{j=0}^{s-1}\binom{s}{j+1}3^j,
\label{eq:base4poly}
\end{equation}
and hence, for every $m\ge1$,
\begin{equation}
\Rep_s(4)\equiv
\sum_{j=0}^{m-1}\binom{s}{j+1}3^j
\pmod{3^m}.
\label{eq:base4trunc}
\end{equation}
This is an exact finite-depth formula. The binomial coefficients are not themselves ternary digits because carries remain present, but the identity gives a useful algebraic coordinate for the first $m$ digits.

\begin{example}[A numerical block decomposition and finite \(3\)-adic expansion]
Consider \(\Rep_7(4)\) and split the index as
\[
7=3\cdot2+1.
\]
The block identity gives
\[
\Rep_7(4)
=
\Rep_3(4)\Rep_2(4^3)
+
4^6\Rep_1(4).
\]
Now
\[
\Rep_3(4)=1+4+16=21,
\]
\[
\Rep_2(64)=1+64=65,
\]
and
\[
\Rep_1(4)=1.
\]
Therefore
\[
\Rep_7(4)
=
21\cdot65+4096
=
5461,
\]
which agrees with
\[
\Rep_7(4)=\frac{4^7-1}{3}=5461.
\]

The finite \(3\)-adic expansion can be seen just as concretely. For
\(s=5\), modulo \(27\),
\[
\Rep_5(4)
\equiv
5
+
3\binom52
+
3^2\binom53
\pmod{27}.
\]
Hence
\[
\Rep_5(4)
\equiv
5+30+90
=
125
\equiv17
\pmod{27}.
\]
Since
\[
17=(122)_3,
\]
the calculation recovers exactly the last three ternary digits of
\[
\Rep_5(4)=341=(110122)_3.
\]

This also illustrates why the binomial coefficients should not
themselves be interpreted as ternary digits: carries are essential.
\end{example}

\section{The repunit reduction}

The first theorem turns the ternary-digit problem into a repunit problem in a lossless way.

\begin{theorem}[Parity and repunit equivalence]\label{thm:repunit-equivalence}
Suppose the ternary expansion of $2^n$ contains only $0$ and $1$. Then $n$ is even. Writing $n=2s$,
\begin{equation}
2^{2s}=4^s=1+3\Rep_s(4),
\label{eq:bridge}
\end{equation}
and
\begin{equation}
2^{2s}\in\Cantor^{\mathrm{fin}}
\quad\Longleftrightarrow\quad
\Rep_s(4)\in\Cantor^{\mathrm{fin}}.
\label{eq:digit-equivalence}
\end{equation}
\end{theorem}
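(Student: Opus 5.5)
The plan has three steps: a parity argument modulo $3$, the geometric-sum identity, and a digit-shift comparison.

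\emph{Parity.} Write $2^n=\sum_{j}\varepsilon_j3^j$ with $\varepsilon_j\in\{0,1\}$. Reducing modulo $3$ gives $2^n\equiv\varepsilon_0\pmod 3$. Since $3\nmid 2^n$, we must have $\varepsilon_0=1$, and hence $2^n\equiv1\pmod 3$. Because $2\equiv-1\pmod 3$, this says $(-1)^n\equiv1\pmod 3$, which forces $n$ to be even. (In particular, the last ternary digit of $2^n$ is $1$.)

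\emph{The bridge identity.} For $n=2s$, we have $2^{2s}=4^s$. The definition $\Rep_s(4)=(4^s-1)/3$ gives $4^s=1+3\Rep_s(4)$ immediately, including the case $s=0$, where $\Rep_0(4)=0$. Alternatively, one can read this off from \eqref{eq:recurrence} or from \eqref{eq:base4poly}.

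\emph{Digit equivalence.} The key remark is that multiplying by $3$ shifts the ternary expansion one place to the left and appends a $0$. Adding $1$ then replaces that trailing $0$ by $1$, with no carry. Concretely, if $\Rep_s(4)=\sum_{j=0}^{L-1}d_j3^j$ with $d_j\in\{0,1,2\}$, then
\[
4^s=1+\sum_{j=0}^{L-1}d_j3^{j+1}.
\]
This is a valid ternary expansion of $4^s$, so by uniqueness of base-$3$ expansions the digits of $4^s$ are exactly $1,d_0,\dots,d_{L-1}$. Therefore $4^s$ omits the digit $2$ if and only if every $d_j$ lies in $\{0,1\}$, which is \eqref{eq:digit-equivalence}.

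None of these steps is a real obstacle. The only point needing care is the appeal to uniqueness of ternary expansions when identifying the digits of $1+3\Rep_s(4)$. This holds because the expansion displayed above already has all its digits in $\{0,1,2\}$, so no carrying occurs.
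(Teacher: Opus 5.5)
Your proposal is correct and matches the paper's proof essentially step for step: the parity argument (the last ternary digit is forced to be $1$ because $3\nmid 2^n$, and then $2\equiv-1\pmod 3$ gives $n$ even) is the same, and your digit comparison is the same shift observation run in the opposite direction. The paper removes the terminal digit $1$ from $4^s$ and divides by $3$ (``no borrowing''), whereas you build the expansion of $1+3\Rep_s(4)$ from that of $\Rep_s(4)$ and invoke uniqueness of base-$3$ expansions; this makes the no-carry point slightly more explicit but changes nothing of substance.
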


\begin{proof}
If the ternary expansion of $2^n$ omits $2$, its least significant ternary digit cannot be $0$ because $3\nmid2^n$; it must therefore be $1$. Hence $2^n\equiv1\pmod3$. Since $2\equiv-1\pmod3$, this forces $n$ even, say $n=2s$.

The identity \eqref{eq:bridge} is the definition of $\Rep_s(4)$. Since $4^s\equiv1\pmod3$, its least significant ternary digit is $1$. Subtracting $1$ from $4^s$ therefore causes no borrowing, and division by $3$ simply shifts the remaining ternary digits one place to the right. Thus the digits of $\Rep_s(4)$ are exactly the ternary digits of $4^s$ with the terminal $1$ removed. This proves \eqref{eq:digit-equivalence}.
\end{proof}

\begin{corollary}[Equivalent exponent formulation]\label{cor:sharp-s}
The sharp Erd\H{o}s conjecture is equivalent to
\[
\{s\in\Nzero:\Rep_s(4)\in\Cantor^{\mathrm{fin}}\}=\{0,1,4\}.
\]
\end{corollary}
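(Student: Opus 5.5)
The corollary should follow directly from \cref{thm:repunit-equivalence}. The plan is to show that the map \(s\mapsto 2s\) is a bijection between the sets
\[
S:=\{s\in\Nzero:\Rep_s(4)\in\Cantor^{\mathrm{fin}}\},
\qquad
N:=\{n\in\Nzero:2^n\in\Cantor^{\mathrm{fin}}\}.
\]
Once that bijection is in place, the sharp conjecture \(N=\{0,2,8\}\) is equivalent to \(S=\{0,1,4\}\), since halving carries \(\{0,2,8\}\) exactly onto \(\{0,1,4\}\).

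First, \(2S\subseteq N\). Take \(s\in S\). If \(s\ge1\), the equivalence \eqref{eq:digit-equivalence} gives \(2^{2s}\in\Cantor^{\mathrm{fin}}\). The case \(s=0\) needs a separate check, because the proof of \cref{thm:repunit-equivalence} is phrased via \eqref{eq:bridge}. With the convention \(\Rep_0(4)=0\), the bridge reads \(4^0=1=1+3\cdot0\). Here \(0\) has the empty ternary expansion (\(L=0\) in the definition of \(\Cantor^{\mathrm{fin}}\)), and \(1=(1)_3\), so both sides lie in \(\Cantor^{\mathrm{fin}}\). The digit-shift argument in the proof therefore still applies, since removing the terminal \(1\) from \((1)_3\) leaves the empty expansion.

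Second, \(N\subseteq 2S\). Take \(n\in N\). The parity part of \cref{thm:repunit-equivalence} forces \(n=2s\) for some \(s\in\Nzero\). The equivalence \eqref{eq:digit-equivalence} then gives \(\Rep_s(4)\in\Cantor^{\mathrm{fin}}\), so \(s\in S\).

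Together these give \(N=2S\), and since \(s\mapsto2s\) is injective, it is a bijection \(S\to N\). There is no real obstacle in this argument. The only point needing care is the degenerate index \(s=0\), equivalently \(n=0\): one has to confirm that the conventions \(\Rep_0(4)=0\) and "empty expansion omits \(2\)" are consistent with the paper's definition of \(\Cantor^{\mathrm{fin}}\), so that \(s=0\) genuinely belongs to the set on the right-hand side of the corollary.
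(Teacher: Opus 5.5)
Your proposal is correct and is essentially the paper's argument: the paper gives no separate proof and treats the corollary as immediate from \cref{thm:repunit-equivalence}, whose parity step and digit-shift equivalence (valid for every $s\ge0$, since it uses only $4^s\equiv1\pmod 3$) give exactly your identity $N=2S$, so that halving $\{0,2,8\}$ yields $\{0,1,4\}$. Your separate check of $s=0$ is harmless but not strictly needed, because the same digit-shift argument already covers $4^0=1$ and $\Rep_0(4)=0$, whose expansion is the empty ($L=0$) one.
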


\begin{example}[The three known solutions]
The repunit coordinate makes the known solutions transparent:
\[
\begin{array}{c|c|c|c}
 s & n=2s & \Rep_s(4) & 2^{2s}=1+3\Rep_s(4)\\
\hline
0&0&0&1=(1)_3\\
1&2&1=(1)_3&4=(11)_3\\
4&8&85=(10011)_3&256=(100111)_3.
\end{array}
\]
The intermediate value $s=2$ already fails because $\Rep_2(4)=5=(12)_3$.
\end{example}

\subsection{A valuation invariant}

The required valuation relation can be proved directly, without invoking a separate lifting-the-exponent source.

\begin{lemma}[Principal $3$-adic growth]\label{lem:principal-growth}
For every $t\ge0$,
\begin{equation}
4^{3^t}=1+3^{t+1}u_t,
\qquad u_t\equiv1\pmod3.
\label{eq:principal-growth}
\end{equation}
In particular, $\vthree(4^{3^t}-1)=t+1$.
\end{lemma}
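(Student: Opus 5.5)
We argue by induction on $t$, which is the standard lifting-the-exponent step specialised to $p=3$ and base $4$. For $t=0$ we have $4=1+3\cdot1$, so $u_0=1\equiv1\pmod3$. For the inductive step, suppose $4^{3^t}=1+3^{t+1}u_t$ with $u_t\equiv1\pmod3$, and write $y=3^{t+1}u_t$. Then
\[
4^{3^{t+1}}=(1+y)^3=1+3y+3y^2+y^3,
\]
so we set
\[
3^{t+2}u_{t+1}:=3y+3y^2+y^3,\qquad u_{t+1}=u_t+3^{t+1}u_t^2+3^{2t+1}u_t^3 .
\]

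To check the congruence for $u_{t+1}$, we divide the three terms by $3^{t+2}$. The first gives $3y/3^{t+2}=u_t$. The second gives $3y^2/3^{t+2}=3^{t+1}u_t^2$, which is divisible by $3$ because $t+1\ge1$. The third gives $y^3/3^{t+2}=3^{2t+1}u_t^3$, which is divisible by $3$ because $2t+1\ge1$. Hence $u_{t+1}$ is an integer and $u_{t+1}\equiv u_t\equiv1\pmod3$, which completes the induction.

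The valuation statement then follows immediately. Since $u_t\equiv1\pmod3$, we have $3\nmid u_t$, and therefore $\vthree(4^{3^t}-1)=\vthree(3^{t+1}u_t)=t+1$.

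The only point needing care is the case $t=0$ of the inductive step, where one might expect the cube term to fail to be divisible by $3^{t+2}$. Here $y^3=27u_0^3$, and $27/9=3$, so it is still divisible by $3$ after division by $3^{2}$; the exponent bookkeeping $3(t+1)-(t+2)=2t+1\ge1$ covers this uniformly.
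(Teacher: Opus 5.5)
Your proof is correct and follows essentially the same argument as the paper: induction on $t$, with $(1+3^{t+1}u_t)^3$ expanded, and the observation that the last two terms are divisible by $3^{t+3}$, so $u_{t+1}\equiv u_t\equiv 1\pmod 3$. Your explicit formula for $u_{t+1}$ and your remark on the case $t=0$ add only bookkeeping detail to the same argument.
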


\begin{proof}
For $t=0$, $4=1+3$, so the statement holds with $u_0=1$. Suppose
$4^{3^t}=1+3^{t+1}u_t$ with $u_t\equiv1\pmod3$. Cubing gives
\[
4^{3^{t+1}}
=(1+3^{t+1}u_t)^3
=1+3^{t+2}u_t+3^{2t+3}u_t^2+3^{3t+3}u_t^3.
\]
The last two terms are divisible by $3^{t+3}$, so the quotient of
$4^{3^{t+1}}-1$ by $3^{t+2}$ is congruent to $u_t\equiv1\pmod3$. This proves the induction.
\end{proof}

\begin{proposition}[Trailing zero valuation]\label{prop:valuation}
For every $s\ge1$,
\begin{equation}
\vthree\!\left(\Rep_s(4)\right)=\vthree(s).
\label{eq:valuation}
\end{equation}
\end{proposition}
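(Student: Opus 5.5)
The plan is to reduce the statement to \cref{lem:principal-growth} via the identity $3\Rep_s(4)=4^s-1$. Since $\vthree(3)=1$, the claim \eqref{eq:valuation} is equivalent to
\[
\vthree(4^s-1)=\vthree(s)+1\qquad(s\ge1).
\]
I would write $s=3^t k$ with $t=\vthree(s)$ and $3\nmid k$, so it suffices to show $\vthree(4^{3^t k}-1)=t+1$.

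Put $B=4^{3^t}$. By \cref{lem:principal-growth}, $B=1+3^{t+1}u_t$ with $u_t\equiv1\pmod3$. Then
\[
4^s-1=B^k-1=(B-1)\,\Rep_k(B),
\]
using the geometric-sum definition, or equivalently \eqref{eq:composition} with $b=4$, $u=3^t$, $v=k$ after multiplying by $3$. The first factor has valuation exactly $t+1$, again by \cref{lem:principal-growth}.

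For the second factor, I would use $B\equiv1\pmod3$. Each term $B^j$ is then $\equiv1\pmod 3$, so $\Rep_k(B)=1+B+\cdots+B^{k-1}\equiv k\pmod3$. Since $3\nmid k$, this gives $\vthree(\Rep_k(B))=0$. Alternatively, the $m=1$ case of \eqref{eq:generaltrunc} with $p=3$, $e=t+1$, $c=u_t$ gives $\Rep_k(B)\equiv k\pmod{3^{t+1}}$ directly. Combining the two factors gives $\vthree(4^s-1)=t+1$, and dividing by $3$ finishes the proof.

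There is no real obstacle here. The only care needed is the bookkeeping that converts between $\Rep_s(4)$ and $4^s-1$ (the extra factor $3$) and the separation of the $3$-part of $s$ from its unit part. Note also that the $s\ge1$ hypothesis matters, because $\Rep_0(4)=0$ has infinite valuation.
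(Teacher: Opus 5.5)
Your proposal is correct and follows the paper's proof essentially verbatim: write $s=3^t k$ with $3\nmid k$, factor $4^s-1=(B-1)(1+B+\cdots+B^{k-1})$ with $B=4^{3^t}$, take $\vthree(B-1)=t+1$ from \cref{lem:principal-growth}, observe that the second factor is $\equiv k\not\equiv0\pmod 3$, and divide by $3$. Your alternative routes through \eqref{eq:composition} and the $m=1$ case of \eqref{eq:generaltrunc} are also valid.
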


\begin{proof}
Write $s=3^t u$ with $3\nmid u$. Put $X=4^{3^t}$. Then
\[
4^s-1=X^u-1=(X-1)(1+X+\cdots+X^{u-1}).
\]
By lemma \ref{lem:principal-growth}, $\vthree(X-1)=t+1$ and $X\equiv1\pmod3$. The second factor is therefore congruent to $u\not\equiv0\pmod3$, so it contributes no further factor of $3$. Thus
$\vthree(4^s-1)=t+1$. Since $\Rep_s(4)=(4^s-1)/3$, equation \eqref{eq:valuation} follows.
\end{proof}

Thus the number of trailing ternary zeros of $\Rep_s(4)$ is exactly the number of trailing base-$3$ zeros of $s$. This is the first indication that exponent digits and repunit digits are related by a triangular, valuation-compatible transformation rather than randomly.

\begin{example}[Trailing zeros are controlled exactly by the exponent]
Take
\[
s=18=2\cdot3^2.
\]
Thus
\[
\vthree(s)=2.
\]
The valuation theorem predicts
\[
\vthree\!\left(\Rep_{18}(4)\right)=2.
\]

Indeed, modulo \(27\), the truncated expansion gives
\[
\Rep_{18}(4)
\equiv
18
+
3\binom{18}{2}
+
9\binom{18}{3}
\pmod{27}.
\]
The second and third terms are divisible by \(27\), so
\[
\Rep_{18}(4)\equiv18\pmod{27}.
\]
Since
\[
18=2\cdot3^2,
\]
the residue is divisible by \(9\) but not by \(27\).

Therefore \(\Rep_{18}(4)\) has exactly two trailing ternary zeros.
Equivalently, its ternary expansion ends in a block of the form
\[
\cdots 1200_3.
\]

Thus multiplication of the exponent by a power of \(3\) has an exact
visible consequence at the right-hand end of the repunit expansion.
\end{example}

\section{Finite-depth canonical residue blocks}

We now organise the finite residue problem by unique congruence seeds and their complete arithmetic fibres. The construction is defined and proved directly in the present setting.

\begin{definition}[Finite repunit coordinate]
For $m\ge1$, define
\[
\Phi_m:\Z/3^m\Z\longrightarrow\Z/3^m\Z,
\qquad
\Phi_m([s])=[\Rep_s(4)].
\]
\end{definition}

The definition is well posed because exponent shifts by $3^m$ leave the repunit unchanged modulo $3^m$; a stronger statement will follow from the lift theorem in \cref{sec:lifting}. We first prove bijectivity directly.

\begin{lemma}[Order of $4$ modulo powers of $3$]\label{lem:order}
For $m\ge0$,
\begin{equation}
\operatorname{ord}_{3^{m+1}}(4)=3^m.
\label{eq:order4}
\end{equation}
\end{lemma}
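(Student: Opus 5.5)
The plan is to read the order directly off \cref{lem:principal-growth}, which gives the exact $3$-adic valuation of $4^{3^t}-1$. First I show that $3^m$ annihilates $4$ modulo $3^{m+1}$. Taking $t=m$ in \eqref{eq:principal-growth} gives
\[
4^{3^m}=1+3^{m+1}u_m .
\]
Hence $4^{3^m}\equiv1\pmod{3^{m+1}}$, and so $\operatorname{ord}_{3^{m+1}}(4)$ divides $3^m$. The order is therefore $3^k$ for some $0\le k\le m$.

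Next I rule out every proper divisor. It suffices to treat $3^{m-1}$ when $m\ge1$, since every proper divisor of $3^m$ divides $3^{m-1}$. Taking $t=m-1$ in \cref{lem:principal-growth} gives $\vthree(4^{3^{m-1}}-1)=m$. Thus $4^{3^{m-1}}\not\equiv1\pmod{3^{m+1}}$, which forces the order to equal $3^m$.

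The case $m=0$ is trivial: every integer is congruent to $1$ modulo $3$ up to the identity $4\equiv1\pmod 3$, so the order is $1=3^0$. Alternatively, one can bypass the lemma's indexing and apply \cref{prop:valuation} with $s=3^{m-1}$. This gives $\vthree(4^s-1)=1+\vthree(s)=m$ and yields the same conclusion.

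There is no real obstacle here. The only point needing care is the reduction "the order divides $3^m$, so it is a power of $3$, so checking $3^{m-1}$ suffices", which uses the standard fact that the order divides any exponent annihilating the element.
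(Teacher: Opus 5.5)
Your proof is correct and follows essentially the paper's route: both read $4^{3^m}\equiv1\pmod{3^{m+1}}$ and $\vthree(4^{3^{m-1}}-1)=m$ off \cref{lem:principal-growth}, the only difference being that you get ``the order is a power of $3$'' directly from the first congruence, whereas the paper appeals to the $3$-power part of $\varphi(3^{m+1})=2\cdot3^m$, which is redundant given that congruence. Your sentence on $m=0$ is misworded, since not every integer is $\equiv1\pmod 3$, but nothing is lost because your first step already covers that case: the order divides $3^0=1$.
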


\begin{proof}
By lemma \ref{lem:principal-growth}, $4^{3^m}\equiv1\pmod{3^{m+1}}$, while for $m\ge1$ the same lemma gives
$\vthree(4^{3^{m-1}}-1)=m$, so $4^{3^{m-1}}\not\equiv1\pmod{3^{m+1}}$.
Because $4\equiv1\pmod3$, its order modulo $3^{m+1}$ divides the $3$-power part of $\varphi(3^{m+1})=2\cdot3^m$. The only possibility is therefore $3^m$.
\end{proof}

\begin{theorem}[Permutation theorem]\label{thm:permutation}
For every $m\ge1$, $\Phi_m$ is a permutation of $\Z/3^m\Z$.
\end{theorem}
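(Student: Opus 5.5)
The plan has two parts. First I confirm that $\Phi_m$ is well defined. Then I prove injectivity, which suffices because $\Z/3^m\Z$ is finite.

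\textbf{Well-definedness.} By the addition identity \eqref{eq:addition},
\[
\Rep_{s+3^m}(4)=\Rep_s(4)+4^s\Rep_{3^m}(4).
\]
\Cref{prop:valuation} gives $\vthree(\Rep_{3^m}(4))=m$, so $3^m\mid\Rep_{3^m}(4)$. Hence the class of $\Rep_s(4)$ modulo $3^m$ depends only on $s$ modulo $3^m$.

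\textbf{Injectivity.} Suppose $s,s'\ge0$ satisfy $\Rep_s(4)\equiv\Rep_{s'}(4)\pmod{3^m}$. Without loss of generality $s'=s+d$ with $d\ge0$. Applying \eqref{eq:addition} again,
\[
\Rep_{s+d}(4)-\Rep_s(4)=4^s\Rep_d(4).
\]
Since $4^s$ is a unit modulo $3$, this gives $3^m\mid\Rep_d(4)$. If $d=0$ we are done. Otherwise \cref{prop:valuation} gives
\[
\vthree(d)=\vthree(\Rep_d(4))\ge m,
\]
so $d\equiv0\pmod{3^m}$ and $s\equiv s'\pmod{3^m}$.

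There is an equivalent route through \cref{lem:order}. Since
\[
3\bigl(\Rep_{s'}(4)-\Rep_s(4)\bigr)=4^{s'}-4^s,
\]
the congruence modulo $3^m$ is equivalent to $4^{s'}\equiv4^s\pmod{3^{m+1}}$. By \cref{lem:order} this holds exactly when $s\equiv s'\pmod{3^m}$. This argument gives injectivity and well-definedness at the same time.

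\textbf{Surjectivity.} An injective self-map of the finite set $\Z/3^m\Z$ is surjective, so $\Phi_m$ is a permutation.

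The only delicate step is the factor of $3$ in the bridge between repunits and powers. A congruence modulo $3^m$ for $\Rep_s(4)$ corresponds to a congruence modulo $3^{m+1}$ for $4^s$, which is exactly what \cref{lem:order} supplies.
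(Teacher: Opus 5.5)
Your proof is correct, and your ``equivalent route'' through \cref{lem:order} is exactly the paper's argument: multiply the repunit congruence by $3$ to get $4^{s'}\equiv4^s\pmod{3^{m+1}}$, then read off $3^m\mid s'-s$ from $\operatorname{ord}_{3^{m+1}}(4)=3^m$. Your primary argument is genuinely different, because it never leaves repunit coordinates. It combines the addition identity $\Rep_{s+d}(4)-\Rep_s(4)=4^s\Rep_d(4)$ with the valuation invariant of \cref{prop:valuation}. Since $4^s$ is a $3$-adic unit, this gives the sharper statement $\vthree\bigl(\Rep_{s+d}(4)-\Rep_s(4)\bigr)=\vthree(d)$ for every $d\ge1$. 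That is the isometry \eqref{eq:isometry} on $\Nzero$ directly, and your well-definedness and injectivity steps are just its two directions. The paper instead obtains the isometry later from the bijectivity of each $\Phi_m$, and justifies well-definedness only in a one-line remark before the theorem. The paper's route is shorter and gives both directions at once as an ``if and only if'' via the order of $4$. Neither route is more elementary: both are the same fact, $\vthree(4^d-1)=\vthree(d)+1$, which comes from \cref{lem:principal-growth}.
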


\begin{proof}
If $\Phi_m([r])=\Phi_m([s])$, then
\[
\frac{4^r-1}{3}\equiv\frac{4^s-1}{3}\pmod{3^m},
\]
so $4^r\equiv4^s\pmod{3^{m+1}}$. Therefore $4^{r-s}\equiv1\pmod{3^{m+1}}$. By lemma \ref{lem:order}, $3^m\mid r-s$, hence $[r]=[s]$ in $\Z/3^m\Z$. Thus $\Phi_m$ is injective and, on a finite set, bijective.
\end{proof}

\begin{definition}[Canonical seed and fibre]\label{def:seed-fibre}
For $c\in\{0,1,\dots,3^m-1\}$, let $\sigma_m(c)$ be the unique integer with
\[
0\le\sigma_m(c)<3^m,
\qquad
\Rep_{\sigma_m(c)}(4)\equiv c\pmod{3^m}.
\]
The depth-$m$ fibre over $c$ is
\[
\cell_m(c):=
\{s\in\Nzero:\Rep_s(4)\equiv c\pmod{3^m}\}.
\]
\end{definition}

\begin{corollary}[Exhaustive arithmetic fibres]\label{cor:fibres}
For every $m\ge1$ and residue $c$,
\begin{equation}
\cell_m(c)=\{\sigma_m(c)+3^mq:q\in\Nzero\}.
\label{eq:fibre}
\end{equation}
The fibres are disjoint and exhaust $\Nzero$.
\end{corollary}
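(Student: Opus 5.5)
The argument has two parts: first show that $\Rep_s(4)\bmod 3^m$ depends only on $s\bmod 3^m$, and then use the permutation property of $\Phi_m$ to match each residue class of exponents with exactly one value $c$.

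\emph{Step 1 (periodicity).} For $s\in\Nzero$ and $q\in\Nzero$, apply the addition law \eqref{eq:addition} with $u=s$ and $v=3^mq$:
\[
\Rep_{s+3^mq}(4)=\Rep_s(4)+4^s\Rep_{3^mq}(4).
\]
By the composition law \eqref{eq:composition}, $\Rep_{3^mq}(4)=\Rep_{3^m}(4)\Rep_q(4^{3^m})$. Proposition~\ref{prop:valuation} gives $\vthree(\Rep_{3^m}(4))=m$, so this term is divisible by $3^m$. (Equivalently, Lemma~\ref{lem:order} gives $4^{3^m}\equiv1\pmod{3^{m+1}}$ and one divides by $3$.) Hence
\[
\Rep_{s+3^mq}(4)\equiv\Rep_s(4)\pmod{3^m},
\]
which also justifies that $\Phi_m$ is well defined.

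\emph{Step 2 (fibre description).} Write $\sigma=\sigma_m(c)$.

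For the inclusion $\supseteq$: by Step 1, $\Rep_{\sigma+3^mq}(4)\equiv\Rep_\sigma(4)\equiv c\pmod{3^m}$, so every $\sigma+3^mq$ lies in $\cell_m(c)$.

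For the inclusion $\subseteq$, let $s\in\cell_m(c)$ and write $s=r+3^mq$ with $0\le r<3^m$ and $q\ge0$. Step 1 gives $\Rep_r(4)\equiv\Rep_s(4)\equiv c$. Theorem~\ref{thm:permutation} says $\Phi_m$ is injective, so $[r]=[\sigma]$. Since both $r$ and $\sigma$ lie in $[0,3^m)$, this forces $r=\sigma$. The key point here is that $q\ge0$ automatically, because $s\ge0$ and $r$ is its least non-negative residue, so the progression starts exactly at the canonical seed.

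\emph{Step 3 (disjointness and exhaustion).} Disjointness is immediate: $\Rep_s(4)$ has a single residue modulo $3^m$, so $s$ lies in at most one fibre. Exhaustion holds because every $s\in\Nzero$ lies in $\cell_m(c)$ for $c$ the least non-negative residue of $\Rep_s(4)$.

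There is no real obstacle. The only point needing care is the periodicity in Step 1. The paper explicitly defers it to the lift theorem, so I prove it directly here from the toolkit rather than relying on that later result.
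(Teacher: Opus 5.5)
Your proof is correct and follows the same route as the paper: periodicity of $\Rep_s(4)$ modulo $3^m$, combined with injectivity of $\Phi_m$ from \cref{thm:permutation}, forces every $s\in\cell_m(c)$ into the single class of $\sigma_m(c)$. The paper compresses this into one line and leaves well-definedness to Lemma~\ref{lem:order} and \cref{thm:lift}, whereas you prove the periodicity explicitly from the addition and composition laws; that is a helpful elaboration rather than a different argument.
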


\begin{proof}
The residue class of $s$ modulo $3^m$ is uniquely determined by $\Phi_m(s)=c$. The least non-negative representative is $\sigma_m(c)$.
\end{proof}

This is a canonical seed-plus-fibre decomposition specialised to the ternary-digit problem. There is one index modulus at each depth, and no global Diophantine conclusion is attached to the decomposition without an additional argument.

\subsection{Admissible suffixes}

For $m\ge1$, let
\begin{equation}
C_m:=\left\{\sum_{j=0}^{m-1}\varepsilon_j3^j:\varepsilon_j\in\{0,1\}\right\}
\subset\{0,\ldots,3^m-1\}.
\label{eq:Cm}
\end{equation}
Then $|C_m|=2^m$.

\begin{definition}[Admissible exponent classes]
Define
\begin{equation}
\Adm_m:=\Phi_m^{-1}(C_m)
=\{s\pmod{3^m}:\Rep_s(4)\bmod3^m\in C_m\}.
\label{eq:Am}
\end{equation}
\end{definition}

\begin{corollary}[Exact finite-depth count]\label{cor:count}
For every $m\ge1$,
\begin{equation}
|\Adm_m|=2^m,
\qquad
\frac{|\Adm_m|}{3^m}=\left(\frac23\right)^m.
\label{eq:density}
\end{equation}
\end{corollary}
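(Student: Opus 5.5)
The count follows from the permutation theorem, \cref{thm:permutation}, together with an elementary count of $C_m$. By definition, $\Adm_m=\Phi_m^{-1}(C_m)$ is the preimage of the subset $C_m\subset\Z/3^m\Z$ under $\Phi_m$. Since $\Phi_m$ is a bijection of the finite set $\Z/3^m\Z$, preimages of subsets have the same cardinality as the subsets themselves. Hence $|\Adm_m|=|C_m|$, and the whole problem reduces to counting $C_m$.

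To count $C_m$, I consider the map $\{0,1\}^m\to\{0,\dots,3^m-1\}$ given by $(\varepsilon_0,\dots,\varepsilon_{m-1})\mapsto\sum_{j=0}^{m-1}\varepsilon_j3^j$. First, the image lands in the stated range, because the largest value is $\sum_{j<m}3^j=(3^m-1)/2<3^m$. Second, the map is injective by uniqueness of base-$3$ expansions: the $\varepsilon_j$ are precisely the ternary digits of the value. It follows that $|C_m|=2^m$, which the paper already records after \eqref{eq:Cm}.

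I also need that distinct elements of $C_m$ remain distinct as residues modulo $3^m$. This holds because they all lie in $\{0,\dots,3^m-1\}$, a complete residue system, so $C_m$ is genuinely a $2^m$-element subset of $\Z/3^m\Z$. Combining the two steps gives $|\Adm_m|=2^m$. Dividing by $|\Z/3^m\Z|=3^m$ yields the density $(2/3)^m$, which proves \eqref{eq:density}.

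There is no real obstacle here. The only point needing a remark is this residue-system identification, which ensures that the preimage count under the bijection $\Phi_m$ is exactly $|C_m|$.
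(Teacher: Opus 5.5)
Your proposal is correct and matches the paper's argument: $\Adm_m=\Phi_m^{-1}(C_m)$ is the preimage of a $2^m$-element set under the bijection $\Phi_m$ of \cref{thm:permutation}, so it has exactly $2^m$ elements, and dividing by $3^m$ gives the density $(2/3)^m$. Your extra checks, that $|C_m|=2^m$ by uniqueness of ternary expansions and that $C_m$ lies in a complete residue system modulo $3^m$, spell out what the paper simply asserts after \eqref{eq:Cm}.
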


\begin{proof}
By \cref{thm:permutation}, $\Phi_m$ is a bijection, so the inverse image of the $2^m$-element set $C_m$ has exactly $2^m$ elements.
\end{proof}

Equation \eqref{eq:density} is exact, not probabilistic. At depth $m$, precisely a proportion $(2/3)^m$ of exponent residue classes have their first $m$ repunit ternary digits in $\{0,1\}$. What it does \emph{not} say is how many finite ordinary exponents avoid $2$ in \emph{all} digits of the full repunit.

\begin{example}[A complete canonical fibre]
At depth \(m=3\), consider the admissible target residue
\[
c=(010)_3=3.
\]
The canonical seed is
\[
\sigma_3(3)=12.
\]
Indeed,
\[
\Rep_{12}(4)\equiv3\pmod{27}.
\]

The complete depth-\(3\) fibre is therefore
\[
\cell_3(3)
=
\{12+27q:q\in\Nzero\}.
\]
For example,
\[
s=12,\qquad s=39,\qquad s=66
\]
all produce repunits whose final three ternary digits are \(010\).
In particular,
\[
\Rep_{39}(4)\equiv3=(010)_3\pmod{27}.
\]

This example makes the word \emph{fibre} literal: a prescribed
three-digit output suffix determines not merely one exponent, but one
complete arithmetic progression of exponents.
\end{example}

\section{The one-digit lift law}\label{sec:lifting}

The internal repunit toolkit is especially effective at the transition from depth $m$ to $m+1$.

\begin{lemma}[A principal block]\label{lem:principalblock}
For $m\ge0$,
\begin{equation}
\Rep_{3^m}(4)\equiv3^m\pmod{3^{m+1}}.
\label{eq:principalblock}
\end{equation}
\end{lemma}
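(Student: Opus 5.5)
The cleanest route goes through the valuation-sharp form of \cref{lem:principal-growth}. Taking $t=m$ there gives $4^{3^m}=1+3^{m+1}u_m$ with $u_m\equiv1\pmod3$. Dividing by $3$ yields the exact identity
\[
\Rep_{3^m}(4)=\frac{4^{3^m}-1}{3}=3^m u_m .
\]
Since $u_m=1+3k$ for some integer $k$, we get $3^m u_m=3^m+3^{m+1}k\equiv3^m\pmod{3^{m+1}}$, which is \eqref{eq:principalblock}. The case $m=0$ is simply $\Rep_1(4)=1$ and is covered by the same computation with $u_0=1$.

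As a cross-check, one could also argue from the truncated expansion \eqref{eq:base4trunc} at depth $m+1$ with $s=3^m$:
\[
\Rep_{3^m}(4)\equiv\sum_{j=0}^{m}\binom{3^m}{j+1}3^j\pmod{3^{m+1}}.
\]
This route requires showing that each term with $j\ge1$ is divisible by $3^{m+1}$. That follows from $\vthree\binom{3^m}{k}=m-\vthree(k)$ together with $j-\vthree(j+1)\ge1$ for $j\ge1$. The verification is routine but needs Kummer-type valuation facts the paper has not set up. I will therefore present the first argument, which uses only \cref{lem:principal-growth}, and mention the binomial route at most as a remark.

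The only real content is the congruence $u_m\equiv1\pmod3$, not merely $3\nmid u_m$. That refinement is exactly what the induction in \cref{lem:principal-growth} supplies, so I expect no real obstacle: this step is where all of the work lies, and it has already been done.
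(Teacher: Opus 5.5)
Your argument is correct and is essentially the paper's own proof: take \cref{lem:principal-growth} with $t=m$, divide $4^{3^m}-1=3^{m+1}u_m$ by $3$, and use $u_m\equiv1\pmod 3$ to get $\Rep_{3^m}(4)=3^m u_m\equiv3^m\pmod{3^{m+1}}$. Your binomial cross-check is also valid, but it is unnecessary.
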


\begin{proof}
By lemma \ref{lem:principal-growth},
\[
4^{3^m}-1=3^{m+1}u_m,
\qquad u_m\equiv1\pmod3.
\]
Dividing by $3$ gives
\[
\Rep_{3^m}(4)=3^m u_m\equiv3^m\pmod{3^{m+1}},
\]
as required.
\end{proof}

\begin{theorem}[Repunit block lift]\label{thm:lift}
Let $m\ge0$, $0\le r<3^m$ and $q\ge0$. Then
\begin{equation}
\Rep_{r+q3^m}(4)
\equiv
\Rep_r(4)+q3^m
\pmod{3^{m+1}}.
\label{eq:lift}
\end{equation}
\end{theorem}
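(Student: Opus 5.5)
The plan is to combine the addition law \eqref{eq:addition} with the composition law \eqref{eq:composition}, and then reduce each factor modulo $3^{m+1}$ using \cref{lem:principalblock} and \cref{lem:principal-growth}. First apply \eqref{eq:addition} with $u=r$ and $v=q3^m$:
\[
\Rep_{r+q3^m}(4)=\Rep_r(4)+4^r\Rep_{q3^m}(4).
\]
The case $m=0$ is immediate: then $r=0$, and \eqref{eq:base4trunc} with $m=1$ gives $\Rep_q(4)\equiv q\pmod 3$. I would therefore concentrate on $m\ge1$, although the argument below in fact works uniformly.

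Next, for $q\ge1$ I would apply \eqref{eq:composition} with $u=3^m$ and $v=q$:
\[
\Rep_{q3^m}(4)=\Rep_{3^m}(4)\,\Rep_q\bigl(4^{3^m}\bigr).
\]
The case $q=0$ is trivial, since both sides of \eqref{eq:lift} then equal $\Rep_r(4)$.

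The two factors are controlled as follows. By \cref{lem:principalblock}, $\Rep_{3^m}(4)=3^m u_m$ with $u_m\equiv1\pmod3$. By \cref{lem:principal-growth}, $X:=4^{3^m}\equiv1\pmod{3^{m+1}}$, and in particular $X\equiv1\pmod3$. Hence
\[
\Rep_q(X)=1+X+\cdots+X^{q-1}\equiv q\pmod 3.
\]
Since the first factor is already divisible by $3^m$, knowing the second factor only modulo $3$ suffices:
\[
\Rep_{q3^m}(4)\equiv 3^m\cdot q\pmod{3^{m+1}}.
\]

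Finally, the prefactor $4^r$ satisfies $4^r\equiv1\pmod3$. Multiplying it against a quantity divisible by $3^m$ therefore changes nothing modulo $3^{m+1}$, so $4^r\Rep_{q3^m}(4)\equiv q3^m\pmod{3^{m+1}}$, which yields \eqref{eq:lift}.

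There is no serious obstacle. The one point needing care is the bookkeeping that each product has a factor $3^m$, so that all remaining factors need only be known modulo $3$. In particular, I will not need the restriction $r<3^m$, which only matters later when the lift is interpreted digitwise.
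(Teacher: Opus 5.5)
Your proposal is correct and follows essentially the same route as the paper: split off $\Rep_r(4)$ with the addition law, factor $\Rep_{q3^m}(4)=\Rep_{3^m}(4)\Rep_q(4^{3^m})$ with the composition law, and combine \cref{lem:principalblock} with $4^{3^m}\equiv 4^r\equiv 1\pmod 3$, so that the cofactors are only needed modulo $3$. Your separate treatment of $q=0$ and $m=0$ is harmless but unnecessary, since the composition law already allows $v=0$, and your remark that the hypothesis $r<3^m$ is never used is accurate.
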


\begin{proof}
By the additive identity \eqref{eq:addition},
\[
\Rep_{r+q3^m}(4)
=\Rep_r(4)+4^r\Rep_{q3^m}(4).
\]
By the composition identity \eqref{eq:composition},
\[
\Rep_{q3^m}(4)=\Rep_{3^m}(4)\Rep_q(4^{3^m}).
\]
Now lemma \ref{lem:principalblock} gives
$\Rep_{3^m}(4)\equiv3^m\pmod{3^{m+1}}$. Also $4^{3^m}\equiv1\pmod3$, so
\[
\Rep_q(4^{3^m})\equiv q\pmod3.
\]
Finally $4^r\equiv1\pmod3$. Multiplying these congruences yields
\[
4^r\Rep_{q3^m}(4)\equiv q3^m\pmod{3^{m+1}},
\]
which proves the result.
\end{proof}

\begin{corollary}[Three lifts, one new digit]\label{cor:three-lifts}
For $t\in\{0,1,2\}$,
\begin{equation}
\Rep_{r+t3^m}(4)
\equiv\Rep_r(4)+t3^m\pmod{3^{m+1}}.
\label{eq:three-lifts}
\end{equation}
Hence the lower $m$ ternary digits are unchanged, while the next digit runs once through $0,1,2$ as $t$ runs through $0,1,2$.
\end{corollary}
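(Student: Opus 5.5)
This is an immediate specialisation of \cref{thm:lift}. I would fix $m\ge0$ and $0\le r<3^m$, and apply \eqref{eq:lift} with $q=t$ for each $t\in\{0,1,2\}$. This gives the congruence \eqref{eq:three-lifts} directly, with no further computation.

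The substantive part is the digit interpretation. Let $\rho\in\{0,\dots,3^m-1\}$ be the least non-negative residue of $\Rep_r(4)$ modulo $3^m$, and write $\Rep_r(4)\equiv \rho+d3^m\pmod{3^{m+1}}$ with $d\in\{0,1,2\}$, so that $d$ is the $(m{+}1)$-st ternary digit of $\Rep_r(4)$. Then \eqref{eq:three-lifts} gives
\[
\Rep_{r+t3^m}(4)\equiv \rho+((d+t)\bmod 3)\,3^m\pmod{3^{m+1}}.
\]
Here we have used that reducing $d+t$ modulo $3$ only affects multiples of $3^{m+1}$, so no carry enters the lower $m$ digits. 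Since $0\le\rho<3^m$, the base-$3$ expansion of the right-hand side, as an integer in $[0,3^{m+1})$, consists of the $m$ digits of $\rho$ followed by the digit $(d+t)\bmod 3$. Hence the lower $m$ digits are those of $\rho$ for every $t$, and they are unchanged.

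As $t$ runs through $0,1,2$, the map $t\mapsto (d+t)\bmod 3$ is a bijection of $\{0,1,2\}$. So the new digit takes each value $0,1,2$ exactly once. If the claim is read literally as ``the next digit is $t$'', this holds when $d=0$, and in particular when $r$ is the canonical seed at depth $m$ viewed at depth $m+1$. In general the correct statement is the cyclic shift by $d$, and I would phrase the conclusion that way.

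The only point needing care is this carry and normalisation issue: making sure that adding $t3^m$ modulo $3^{m+1}$ never disturbs the digits below position $m$. That follows from $0\le\rho<3^m$. Nothing else is an obstacle.
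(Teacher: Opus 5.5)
Your proof is correct and is essentially the paper's: the corollary is the case $q=t$ of \cref{thm:lift}, and ``runs once through $0,1,2$'' means exactly your cyclic shift $(d+t)\bmod 3$, which is how \cref{prop:inverse-lift} later uses it through $\delta_m$. One small caution on your aside: being a canonical depth-$m$ seed does not force $d=0$, since every $0\le r<3^m$ is one; for example, $m=2$ and $r=\sigma_2(3)=3$ give $\Rep_3(4)=21=\tern{210}$, so $d=2$, and the condition ``$r=\sigma_{m+1}(\rho)$'' is merely a restatement of $d=0$.
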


\begin{corollary}[Perfect binary admissibility tree]\label{cor:binary-tree}
Every class in $\Adm_m$ has exactly two children in $\Adm_{m+1}$ among its three lifts
\[
r,
\quad r+3^m,
\quad r+2\cdot3^m
\pmod{3^{m+1}}.
\]
Consequently the admissible residue tree is a full rooted binary tree embedded in the ternary exponent tree.
\end{corollary}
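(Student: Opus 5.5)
The plan is to read the statement off \cref{cor:three-lifts} by tracking the single new ternary digit. Fix a class in $\Adm_m$ and let $r$ be its least non-negative representative, so $0\le r<3^m$. Put $c:=\Rep_r(4)\bmod 3^m$; by the definition of $\Adm_m$ we have $c\in C_m$. The residues modulo $3^{m+1}$ lying over $r$ modulo $3^m$ are exactly $r+t3^m$ with $t\in\{0,1,2\}$, and these three classes are distinct. So the task is to decide, for each $t$, whether $r+t3^m$ lies in $\Adm_{m+1}$.

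Next I will compute the first $m+1$ ternary digits of $\Rep_{r+t3^m}(4)$. \Cref{cor:three-lifts} gives
\[
\Rep_{r+t3^m}(4)\equiv \Rep_r(4)+t3^m \pmod{3^{m+1}}.
\]
Write $\Rep_r(4)\equiv c+d\,3^m\pmod{3^{m+1}}$, where $d\in\{0,1,2\}$ is the $m$-th ternary digit of $\Rep_r(4)$. Then
\[
\Rep_{r+t3^m}(4)\bmod 3^{m+1}=c+((d+t)\bmod 3)\,3^m .
\]
Since $0\le c<3^m$, no carry arises, and the expansion is an honest ternary representation. Its lower $m$ digits are those of $c$, which are already in $\{0,1\}$. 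Hence $r+t3^m\in\Adm_{m+1}$ exactly when $(d+t)\bmod 3\in\{0,1\}$. As $t$ runs through $\{0,1,2\}$, the value $(d+t)\bmod3$ runs through $\{0,1,2\}$ bijectively, so exactly two values of $t$ qualify.

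I also need to check that nothing outside these three lifts enters the count. If $s\in\Adm_{m+1}$, then reducing modulo $3^m$ preserves the lowest $m$ digits of $\Rep_s(4)$, so $s\bmod 3^m\in\Adm_m$. Every class of $\Adm_{m+1}$ is therefore a child of a unique class of $\Adm_m$. Together with $|\Adm_1|=2$ (the two classes $s\equiv0,1\pmod 3$, by \cref{cor:count} or directly), an induction on $m$ gives the tree structure: the root at depth $0$ has its two children forming $\Adm_1$, every node has exactly two children, and every admissible class arises this way. This is the full rooted binary tree embedded in the ternary tree of residues modulo $3^m$, and it is consistent with $|\Adm_m|=2^m$.

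The main obstacle is bookkeeping rather than mathematics. One must use the representative $0\le r<3^m$ required by \cref{thm:lift}, or note that the congruence only depends on $r\bmod 3^{m+1}$. One must also observe that adding $t3^m$ modulo $3^{m+1}$ only changes digit $m$, with no carry into the lower digits.
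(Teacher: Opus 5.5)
Your proposal is correct and follows the paper's own argument: both read the result off \cref{cor:three-lifts}, observing that the lower $m$ digits are unchanged while the new digit runs once through $0,1,2$, so exactly two of the three lifts survive. Your further check that every class of $\Adm_{m+1}$ reduces to a class of $\Adm_m$ spells out the tree-structure claim that the paper leaves implicit.
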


\begin{proof}
The existing lower $m$ output digits lie in $\{0,1\}$. By corollary \ref{cor:three-lifts}, the new digit assumes $0$, $1$ and $2$ exactly once. The two choices yielding $0$ or $1$ are admissible and the third is not.
\end{proof}

\subsection{Relationship with Saye's recursion}

Saye proves an equivalent next-digit mechanism directly for powers of two. In his notation, the smallest positive exponent period preserving $k$ trailing ternary digits is $u_k=2\cdot3^{k-1}$, and the three lifts of an exponent through this period cycle the next ternary digit \cite{Saye2022}. Under the present reduction $n=2s$, the exponent increment
\[
2\cdot3^m
\]
in $n$ is exactly the increment $3^m$ in $s$. Thus \cref{thm:lift} and corollary \ref{cor:three-lifts} are a repunit-coordinate formulation of the same finite-depth arithmetic phenomenon.

This equivalence is useful rather than redundant. Saye's formulation is optimised for computation on $2^n$; the repunit formulation exposes an additional permutation structure, canonical inverse fibres and a natural inverse-limit map on $\Zthree$. In the present treatment the recursion follows directly from the block identities proved in proposition \ref{prop:repunit-toolkit}.

\begin{example}[The three lifts explicitly]
Start with
\[
r=4,
\qquad
m=2.
\]
Since
\[
\Rep_4(4)=85=(10011)_3,
\]
its final two ternary digits are \(11\).

The three exponent lifts modulo \(3^3=27\) are
\[
4,\qquad
4+9=13,\qquad
4+18=22.
\]
The lift theorem gives
\[
\Rep_4(4)\equiv4=(011)_3\pmod{27},
\]
\[
\Rep_{13}(4)\equiv13=(111)_3\pmod{27},
\]
and
\[
\Rep_{22}(4)\equiv22=(211)_3\pmod{27}.
\]

The old suffix \(11\) is unchanged, while the new ternary digit runs
through
\[
0,\qquad1,\qquad2.
\]
Exactly the first two lifts remain admissible.

Returning to the original power-of-two exponent \(n=2s\), the
corresponding exponents are
\[
8,\qquad26,\qquad44,
\]
separated by
\[
18=2\cdot3^2.
\]
This gives a concrete numerical bridge between the repunit lift and the
corresponding exponent-period formulation.
\end{example}

\begin{figure}[htbp]
\centering
\begin{forest}
for tree={
  draw,
  rounded corners,
  align=center,
  font=\scriptsize,
  l sep=11mm,
  s sep=6mm,
  edge={-latex},
}
[{$\varnothing$\\depth $0$}
  [{$0$\\$\sigma_1=0$}, edge label={node[midway,left,font=\scriptsize]{$0$}}
    [{$00$\\$\sigma_2=0$}, edge label={node[midway,left,font=\scriptsize]{$0$}}
      [{$000$\\$\sigma_3=0$}, edge label={node[midway,left,font=\scriptsize]{$0$}}]
      [{$100$\\$\sigma_3=9$}, edge label={node[midway,right,font=\scriptsize]{$1$}}]
    ]
    [{$10$\\$\sigma_2=3$}, edge label={node[midway,right,font=\scriptsize]{$1$}}
      [{$010$\\$\sigma_3=12$}, edge label={node[midway,left,font=\scriptsize]{$0$}}]
      [{$110$\\$\sigma_3=21$}, edge label={node[midway,right,font=\scriptsize]{$1$}}]
    ]
  ]
  [{$1$\\$\sigma_1=1$}, edge label={node[midway,right,font=\scriptsize]{$1$}}
    [{$01$\\$\sigma_2=1$}, edge label={node[midway,left,font=\scriptsize]{$0$}}
      [{$001$\\$\sigma_3=1$}, edge label={node[midway,left,font=\scriptsize]{$0$}}]
      [{$101$\\$\sigma_3=10$}, edge label={node[midway,right,font=\scriptsize]{$1$}}]
    ]
    [{$11$\\$\sigma_2=4$}, edge label={node[midway,right,font=\scriptsize]{$1$}}
      [{$011$\\$\sigma_3=4$}, edge label={node[midway,left,font=\scriptsize]{$0$}}]
      [{$111$\\$\sigma_3=13$}, edge label={node[midway,right,font=\scriptsize]{$1$}}]
    ]
  ]
]
\end{forest}
\caption{The first three levels of the admissible residue tree. A node label is the prescribed repunit suffix, written most-significant digit first, followed by its canonical seed $\sigma_m$. Each new edge prepends the next ternary digit $0$ or $1$. The omitted third lift would prepend $2$.}
\label{fig:tree}
\end{figure}

\section{The inverse-limit repunit coordinate}

The finite permutations $\Phi_m$ are compatible under reduction. This permits a global $3$-adic coordinate without initially invoking analytic $3$-adic exponentiation.

\begin{theorem}[$3$-adic repunit homeomorphism]\label{thm:homeomorphism}
There is a unique map
\[
\Phi:\Zthree\longrightarrow\Zthree
\]
whose reduction modulo $3^m$ is $\Phi_m$ for every $m\ge1$. The map $\Phi$ is an isometric homeomorphism: for $x,y\in\Zthree$,
\begin{equation}
\vthree\bigl(\Phi(x)-\Phi(y)\bigr)=\vthree(x-y),
\qquad x\ne y.
\label{eq:isometry}
\end{equation}
On non-negative integers,
\[
\Phi(s)=\Rep_s(4)=\frac{4^s-1}{3}.
\]
\end{theorem}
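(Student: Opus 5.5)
We will build $\Phi$ as the inverse limit of the finite maps $\Phi_m$, identifying $\Zthree=\varprojlim \Z/3^m\Z$. The first step is \emph{compatibility}: if $\pi_m:\Z/3^{m+1}\Z\to\Z/3^m\Z$ is reduction, then $\pi_m\circ\Phi_{m+1}=\Phi_m\circ\pi_m$. This holds simply because both sides send $[s]$ to $[\Rep_s(4)]\bmod 3^m$. Well-definedness of $\Phi_{m+1}$ itself, i.e.\ independence of the representative $s$ modulo $3^{m+1}$, is exactly \cref{thm:lift} applied at level $m+1$ with $q$ a multiple of $3$. It also follows from $4^{s+3^{m+1}}\equiv 4^s\pmod{3^{m+2}}$ via \cref{lem:order}.

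Given compatibility, for $x\in\Zthree$ with reductions $x_m\in\Z/3^m\Z$, the family $(\Phi_m(x_m))_m$ is a coherent sequence. We define $\Phi(x)$ to be its limit. Uniqueness is immediate, since an element of $\Zthree$ is determined by all its reductions. For a non-negative integer $s$, every reduction of $\Phi(s)$ equals $\Rep_s(4)\bmod 3^m$, so $\Phi(s)=\Rep_s(4)$.

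For the isometry, let $x\ne y$ with $\vthree(x-y)=k$. Then $x\equiv y\pmod{3^k}$, so $\Phi(x)\equiv\Phi(y)\pmod{3^k}$, giving $\vthree(\Phi(x)-\Phi(y))\ge k$. Conversely, $x\not\equiv y\pmod{3^{k+1}}$. Injectivity of $\Phi_{k+1}$ (\cref{thm:permutation}) then gives $\Phi(x)\not\equiv\Phi(y)\pmod{3^{k+1}}$, so the valuation is exactly $k$. This proves \eqref{eq:isometry}.

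An isometry is continuous and injective. Surjectivity follows from bijectivity of each $\Phi_m$. Given $y$, the sets $\Phi_m^{-1}(y_m)$ are singletons, and they are compatible because reduction commutes with $\Phi_m$ and the $\Phi_m$ are injective. They therefore define an $x$ with $\Phi(x)=y$. The inverse is again an isometry, so $\Phi$ is a homeomorphism; alternatively, one can use compactness of $\Zthree$.

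The only genuinely delicate point is the compatibility and well-definedness step. Everything else is formal inverse-limit bookkeeping. That step is nonetheless already secured by \cref{lem:order} and \cref{thm:lift}.
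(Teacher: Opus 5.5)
Your proposal is correct and is essentially the paper's argument: you build $\Phi$ as the inverse limit of the compatible $\Phi_m$ (well-definedness via \cref{thm:lift} or \cref{lem:order}), obtain the exact valuation from injectivity of $\Phi_{k+1}$, and obtain surjectivity and continuity of the inverse from compatibility of the $\Phi_m^{-1}$, which you spell out more explicitly than the paper does. One cosmetic point: well-definedness of $\Phi_{m+1}$ follows from \cref{thm:lift} either at level $m$ with $q$ a multiple of $3$ or at level $m+1$ with arbitrary $q$, so the phrase ``level $m+1$ with $q$ a multiple of $3$'' mixes the two, but either reading gives a sound deduction.
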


\begin{proof}
The congruence maps are compatible because $r\equiv s\pmod{3^m}$ implies, by \cref{thm:lift} or lemma \ref{lem:order},
\[
\Rep_r(4)\equiv\Rep_s(4)\pmod{3^m}.
\]
Thus the $\Phi_m$ define a map on the inverse limit
\[
\Zthree=\varprojlim_m \Z/3^m\Z.
\]
Each $\Phi_m$ is bijective, so the inverse maps are also compatible; hence $\Phi$ is a homeomorphism.

Moreover,
\[
x\equiv y\pmod{3^m}
\quad\Longleftrightarrow\quad
\Phi_m(x)=\Phi_m(y)
\quad\Longleftrightarrow\quad
\Phi(x)\equiv\Phi(y)\pmod{3^m},
\]
where the middle equivalence uses bijectivity of $\Phi_m$. The largest such $m$ is therefore the same on both sides, proving \eqref{eq:isometry}. Agreement with $\Rep_s(4)$ on ordinary integers follows from construction.
\end{proof}

In standard $3$-adic analytic notation, $\Phi(x)$ agrees with $(4^x-1)/3$, since $4=1+3$ admits $3$-adic exponentiation. The inverse-limit proof above is deliberately elementary and keeps the finite residue geometry visible.

\begin{proposition}[Affine conjugacy]\label{prop:conjugacy}
The homeomorphism $\Phi$ conjugates translation by $1$ to the affine map $y\mapsto4y+1$:
\begin{equation}
\Phi(x+1)=4\Phi(x)+1
\qquad(x\in\Zthree).
\label{eq:conjugacy}
\end{equation}
\end{proposition}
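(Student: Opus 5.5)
The plan is to prove \eqref{eq:conjugacy} first on the dense subset $\Nzero\subset\Zthree$ and then extend it to all of $\Zthree$ by continuity. The two steps are an exact identity on ordinary integers and a density argument that uses the isometry \eqref{eq:isometry}.

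\emph{Step 1: non-negative integers.} For $s\in\Nzero$, \cref{thm:homeomorphism} gives $\Phi(s)=\Rep_s(4)$ and $\Phi(s+1)=\Rep_{s+1}(4)$. The recurrence \eqref{eq:recurrence} with $b=4$ then reads $\Rep_{s+1}(4)=4\Rep_s(4)+1$. This is an exact identity of integers, so it holds in $\Zthree$ as well.

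\emph{Step 2: extension to $\Zthree$.} Both sides of \eqref{eq:conjugacy} are continuous functions of $x$. On the left, $x\mapsto x+1$ is continuous, and $\Phi$ is continuous, indeed $1$-Lipschitz by \eqref{eq:isometry}. On the right, $y\mapsto 4y+1$ is continuous, and it is an isometry because $4$ is a $3$-adic unit.

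Now fix $x\in\Zthree$ and choose $s_k\in\Nzero$ with $s_k\equiv x\pmod{3^k}$. For example, take $s_k$ to be the least non-negative representative of $x$ modulo $3^k$. Then $s_k\to x$, and also $s_k+1\to x+1$. Applying Step 1 to $s_k$ and passing to the limit gives $\Phi(x+1)=4\Phi(x)+1$.

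A purely finite-level alternative avoids limits entirely. One checks the congruence $\Phi_m([s]+1)=4\Phi_m([s])+1$ in $\Z/3^m\Z$ for every $m$, using a representative $s\ge0$ and \eqref{eq:recurrence}. The congruence is well defined because $\Phi_m$ is well defined, and compatibility under reduction then yields \eqref{eq:conjugacy}. I would present this version, since it matches the inverse-limit construction of $\Phi$.

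There is no substantial obstacle. The only point needing care is to make sure that passing from integers to $\Zthree$ is legitimate. Concretely, the residue of $x+1$ modulo $3^m$ must be represented by $s+1$ whenever $s$ represents $x$. This is immediate, because reduction modulo $3^m$ is a ring homomorphism.

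Finally, since $\Phi$ is a homeomorphism by \cref{thm:homeomorphism}, \eqref{eq:conjugacy} is equivalent to $\Phi\circ\tau\circ\Phi^{-1}=A$, where $\tau(x)=x+1$ and $A(y)=4y+1$. This is exactly the claimed conjugacy.
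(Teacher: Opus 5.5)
Your proof is correct and is essentially the paper's: the identity on $\Nzero$ comes from \eqref{eq:recurrence}, and it is extended to $\Zthree$ using density of the ordinary integers and continuity of both sides. The finite-level variant you prefer is a valid levelwise form of the same argument, with the small advantage that it uses only the inverse-limit construction of $\Phi$ from the $\Phi_m$ rather than continuity.
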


\begin{proof}
For $s\in\Nzero$, equation \eqref{eq:recurrence} gives
\[
\Rep_{s+1}(4)=4\Rep_s(4)+1.
\]
Ordinary integers are dense in $\Zthree$, and both sides of \eqref{eq:conjugacy} are continuous, so the identity extends to all $x\in\Zthree$.
\end{proof}

\begin{figure}[htbp]
\centering
\begin{tikzcd}[column sep=large,row sep=large]
\Zthree \arrow[r,"x\mapsto x+1"] \arrow[d,"\Phi"',"\cong"] &
\Zthree \arrow[d,"\Phi","\cong"'] \\
\Zthree \arrow[r,"y\mapsto4y+1"'] & \Zthree
\end{tikzcd}
\caption{Exact conjugacy between the $3$-adic adding machine in exponent coordinates and the base-$4$ repunit recurrence.}
\label{fig:conjugacy}
\end{figure}

The finite-level permutation structure therefore passes coherently to the inverse limit: the repunit coordinate defines an isometric homeomorphism of \(\mathbb Z_3\), and the recurrence \(R_{s+1}(4)=4R_s(4)+1\) becomes the affine conjugacy of proposition \ref{prop:conjugacy}. Having established the forward coordinate map and its finite-depth lifting structure, we now turn to the inverse problem: given a prescribed trailing ternary suffix, recover the unique canonical exponent seed that produces it.

\section{Canonical inverse construction}

The preceding finite and \(3\)-adic coordinate constructions naturally suggest the inverse problem: given a prescribed trailing ternary suffix of \(R_s(4)\), determine the unique exponent seed modulo the corresponding power of \(3\) that produces it.

\begin{proposition}[One-step inverse lift]\label{prop:inverse-lift}
Suppose $c_m\in\{0,\dots,3^m-1\}$ and $r=\sigma_m(c_m)$. Let $\delta_m\in\{0,1,2\}$ be the $(m+1)$st ternary digit of $\Rep_r(4)$, i.e.
\[
\delta_m\equiv \left\lfloor\frac{\Rep_r(4)}{3^m}\right\rfloor\pmod3.
\]
To prescribe a next digit $\varepsilon_m\in\{0,1,2\}$, choose the unique
\[
t\in\{0,1,2\}
\quad\text{with}\quad
\delta_m+t\equiv\varepsilon_m\pmod3.
\]
Then
\[
\sigma_{m+1}(c_m+\varepsilon_m3^m)=r+t3^m.
\]
\end{proposition}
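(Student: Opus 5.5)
The plan is to verify directly that $s=r+t3^m$ satisfies the two defining conditions of $\sigma_{m+1}(c_m+\varepsilon_m3^m)$ in \cref{def:seed-fibre}: it lies in $[0,3^{m+1})$, and $\Rep_s(4)\equiv c_m+\varepsilon_m3^m\pmod{3^{m+1}}$. Uniqueness then comes for free from \cref{thm:permutation} at depth $m+1$, since $\Phi_{m+1}$ is a bijection and so there is only one seed in that range. The range condition is immediate from $0\le r<3^m$ and $t\le2$, which give $0\le r+t3^m\le 3^m-1+2\cdot3^m<3^{m+1}$.

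For the congruence, I would first apply \cref{thm:lift} (equivalently \cref{cor:three-lifts}) with $0\le r<3^m$ and $q=t$. This gives
\[
\Rep_{r+t3^m}(4)\equiv\Rep_r(4)+t3^m\pmod{3^{m+1}}.
\]
Next I would express $\Rep_r(4)$ modulo $3^{m+1}$ through its lowest $m+1$ ternary digits. Since $r=\sigma_m(c_m)$, the definition of the seed gives $\Rep_r(4)\equiv c_m\pmod{3^m}$, and $0\le c_m<3^m$. So the residue of $\Rep_r(4)$ modulo $3^m$ is exactly $c_m$. The next digit is $\delta_m$ by its definition as $\lfloor\Rep_r(4)/3^m\rfloor\bmod 3$. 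Hence
\[
\Rep_r(4)\equiv c_m+\delta_m3^m\pmod{3^{m+1}}.
\]
Combining the two displays gives $\Rep_{r+t3^m}(4)\equiv c_m+(\delta_m+t)3^m\pmod{3^{m+1}}$. Because $\delta_m+t\equiv\varepsilon_m\pmod 3$, we have $(\delta_m+t)3^m\equiv\varepsilon_m3^m\pmod{3^{m+1}}$, which is the required congruence.

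The only step needing care is the digit-decomposition step: one must make sure the lower $m$ digits of $\Rep_r(4)$ are literally $c_m$, with no carry ambiguity. This holds because $c_m$ is taken as a canonical representative in $\{0,\dots,3^m-1\}$. I would also note that $t$ is well defined, being the unique residue $\varepsilon_m-\delta_m\bmod 3$ in $\{0,1,2\}$. With these points in place, the proof is a short verification, and I do not expect any genuine obstacle.
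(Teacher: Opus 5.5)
Your proposal is correct and follows the paper's proof, which is simply corollary \ref{cor:three-lifts} (the lift law with $q=t$) read in reverse, together with uniqueness from \cref{thm:permutation}. You additionally make explicit the decomposition $\Rep_r(4)\equiv c_m+\delta_m3^m\pmod{3^{m+1}}$ and the range check $0\le r+t3^m<3^{m+1}$, both of which the paper leaves implicit.
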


\begin{proof}
This is corollary \ref{cor:three-lifts} read in reverse, together with uniqueness from \cref{thm:permutation}.
\end{proof}

For admissibility, one need not choose a fixed next digit: retain the two values of $t$ for which $\delta_m+t$ is $0$ or $1$ modulo $3$. This gives a breadth-first or depth-first search with exactly $2^m$ nodes at level $m$, matching the complexity of Saye's recursive construction \cite{Saye2022}.

\begin{remark}[Triangular digit transform]
Write
\[
s=\sum_{j\ge0}t_j3^j,
\qquad
\Phi(s)=\sum_{j\ge0}y_j3^j.
\]
The lift law implies that for each $m$ there is a function $\kappa_m$ of the lower exponent digits such that
\begin{equation}
y_m\equiv t_m+\kappa_m(t_0,\ldots,t_{m-1})\pmod3.
\label{eq:triangular}
\end{equation}
Thus $\Phi$ is a triangular digit permutation: the $m$th output digit depends on $t_m$ with coefficient $1$ and on lower digits through a carry term. This is another way of seeing why $\Phi_m$ is a permutation at every depth.
\end{remark}

\section{Illustrative datasets and reproducibility}

All tables in this section are reproducible by exact integer arithmetic. They are included to make the finite structure concrete rather than as evidence of a proof.

\subsection{Small powers of two}

\begin{longtable}{@{}r r l r c@{}}
\caption{Ternary expansions of $2^n$ for $0\le n\le20$.}\label{tab:smallpowers}\\
\toprule
$n$ & $2^n$ & $\tern{2^n}$ & number of digit $2$s & omits $2$?\\
\midrule
\endfirsthead
\toprule
$n$ & $2^n$ & $\tern{2^n}$ & number of digit $2$s & omits $2$?\\
\midrule
\endhead
0&1&\texttt{1}&0&Yes\\
1&2&\texttt{2}&1&No\\
2&4&\texttt{11}&0&Yes\\
3&8&\texttt{22}&2&No\\
4&16&\texttt{121}&1&No\\
5&32&\texttt{1012}&1&No\\
6&64&\texttt{2101}&1&No\\
7&128&\texttt{11202}&2&No\\
8&256&\texttt{100111}&0&Yes\\
9&512&\texttt{200222}&4&No\\
10&1024&\texttt{1101221}&2&No\\
11&2048&\texttt{2210212}&4&No\\
12&4096&\texttt{12121201}&3&No\\
13&8192&\texttt{102020102}&3&No\\
14&16384&\texttt{211110211}&2&No\\
15&32768&\texttt{1122221122}&6&No\\
16&65536&\texttt{10022220021}&5&No\\
17&131072&\texttt{20122210112}&5&No\\
18&262144&\texttt{111022121001}&3&No\\
19&524288&\texttt{222122012002}&7&No\\
20&1048576&\texttt{1222021101011}&4&No\\
\bottomrule
\end{longtable}

The table illustrates the sharp conjecture at small scale: only $n=0,2,8$ survive. It also shows why looking for a naive monotone digit statistic is unlikely to be sufficient: the number of $2$s fluctuates strongly.

\subsection{Finite-depth class counts}

\begin{table}[htbp]
\centering
\caption{Exact number of admissible exponent classes at depth $m$.}
\label{tab:counts}
\begin{tabular}{@{}rrrr@{}}
\toprule
$m$ & modulus $3^m$ & admissible classes $2^m$ & exact fraction $(2/3)^m$\\
\midrule
1&3&2&0.666667\\
2&9&4&0.444444\\
3&27&8&0.296296\\
4&81&16&0.197531\\
5&243&32&0.131687\\
6&729&64&0.087791\\
7&2187&128&0.058528\\
8&6561&256&0.039018\\
9&19683&512&0.026012\\
10&59049&1024&0.017342\\
\bottomrule
\end{tabular}
\end{table}

The exponentially decreasing fraction is compatible with a random-digit heuristic, but the exact count comes from bijectivity and is much stronger than a heuristic. Conversely, because the absolute count is $2^m$, the inverse-limit set remains rich.

\subsection{Canonical seeds for short admissible suffixes}

\begin{table}[htbp]
\centering
\caption{Canonical seeds $\sigma_m(c)$ for all admissible suffixes through depth $m=3$. The suffix is written in ordinary ternary order, most-significant digit first.}
\label{tab:seeds3}
\begin{tabular}{@{}ccrr@{}}
\toprule
$m$ & suffix $c$ & decimal $c$ & $\sigma_m(c)$\\
\midrule
1&\texttt{0}&0&0\\
1&\texttt{1}&1&1\\
\midrule
2&\texttt{00}&0&0\\
2&\texttt{01}&1&1\\
2&\texttt{10}&3&3\\
2&\texttt{11}&4&4\\
\midrule
3&\texttt{000}&0&0\\
3&\texttt{001}&1&1\\
3&\texttt{010}&3&12\\
3&\texttt{011}&4&4\\
3&\texttt{100}&9&9\\
3&\texttt{101}&10&10\\
3&\texttt{110}&12&21\\
3&\texttt{111}&13&13\\
\bottomrule
\end{tabular}
\end{table}

The seed need not look like the target suffix: for example, $\sigma_3((010)_3)=12=(110)_3$. The map is triangular but non-trivial because lower-digit carries modify higher output digits.

\begin{table}[htbp]
\centering
\caption{Depth-$4$ admissible target suffixes and their canonical exponent seeds.}
\label{tab:seeds4}
\begin{tabular}{@{}cr@{\qquad}cr@{}}
\toprule
suffix & seed & suffix & seed\\
\midrule
\texttt{0000}&0&\texttt{1000}&27\\
\texttt{0001}&1&\texttt{1001}&28\\
\texttt{0010}&12&\texttt{1010}&39\\
\texttt{0011}&4&\texttt{1011}&31\\
\texttt{0100}&36&\texttt{1100}&63\\
\texttt{0101}&10&\texttt{1101}&37\\
\texttt{0110}&48&\texttt{1110}&75\\
\texttt{0111}&13&\texttt{1111}&40\\
\bottomrule
\end{tabular}
\end{table}

\begin{example}[Output digits and exponent digits need not agree]
Consider the target suffix
\[
(010)_3=3.
\]
Its depth-\(3\) canonical seed is
\[
\sigma_3(3)=12=(110)_3.
\]
Hence
\[
\Rep_{12}(4)\equiv(010)_3\pmod{27}.
\]

By contrast,
\[
(011)_3=4
\]
has the simpler seed
\[
\sigma_3(4)=4=(011)_3.
\]

Thus the inverse residue map sometimes appears to preserve the ternary
word and sometimes rearranges it substantially. The difference is
caused by higher-place carries: the seed map is triangular, not
digitwise identical.
\end{example}

\subsection{Record minima for long admissible trailing blocks}

Following the finite-depth viewpoint used by Saye, define $\rho_2(m)$ as the smallest positive exponent $n$ for which $2^n$ has at least $m$ ternary digits and its last $m$ ternary digits omit $2$. Direct exact computation gives the initial values in \cref{tab:rho}; they agree with the sequence reported in Saye's computational study \cite{Saye2022}.

\begin{table}[htbp]
\centering
\caption{Initial record minima $\rho_2(m)$ and the corresponding admissible trailing block.}
\label{tab:rho}
\begin{tabular}{@{}r r l@{}}
\toprule
$m$ & $\rho_2(m)$ & last $m$ ternary digits of $2^{\rho_2(m)}$\\
\midrule
1&2&\texttt{1}\\
2&2&\texttt{11}\\
3&6&\texttt{101}\\
4&8&\texttt{0111}\\
5&8&\texttt{00111}\\
6&8&\texttt{100111}\\
7&20&\texttt{1101011}\\
8&24&\texttt{01000101}\\
9&24&\texttt{101000101}\\
10&24&\texttt{0101000101}\\
11&72&\texttt{10011101001}\\
12&186&\texttt{011100100101}\\
13&186&\texttt{0011100100101}\\
14&332&\texttt{10000011000111}\\
15&332&\texttt{110000011000111}\\
16&1134&\texttt{0011000101100001}\\
\bottomrule
\end{tabular}
\end{table}

These minima grow irregularly. They concern only trailing blocks, not full digit omission. For example, a power may possess a very long admissible suffix while containing a $2$ further to the left.

\subsection{Reproducibility protocol}

The finite computations require only exact integer arithmetic:
\begin{enumerate}
  \item compute $\Rep_s(4)=(4^s-1)/3$ or, modulo $3^m$, use the recurrence $R_{s+1}\equiv4R_s+1$;
  \item reduce $\Rep_s(4)$ modulo $3^m$ and test whether every one of the $m$ ternary digits lies in $\{0,1\}$;
  \item for canonical inversion, start with the unique seed at depth $m$ and apply proposition \ref{prop:inverse-lift};
  \item independently verify the relation to powers of two using $2^{2s}=1+3\Rep_s(4)$;
  \item for large depths, traverse only the two admissible children of each node rather than all $3^m$ exponent classes.
\end{enumerate}

The last step reduces a depth-$m$ suffix enumeration from $\Theta(3^m)$ naive classes to $\Theta(2^m)$ nodes, consistent with Saye's recursive approach \cite{Saye2022}.

\begin{example}[Generating the first three admissible levels by hand]
Beginning with the depth-\(0\) seed \(0\), the admissible
representatives evolve as follows:
\[
\Adm_0=\{0\},
\]
\[
\Adm_1=\{0,1\},
\]
\[
\Adm_2=\{0,1,3,4\},
\]
and
\[
\Adm_3
=
\{0,1,4,9,10,12,13,21\}.
\]
Their cardinalities are
\[
1,\qquad2,\qquad4,\qquad8,
\]
as predicted.

For example, the depth-\(2\) representative \(r=4\) has the three lifts
\[
4,\qquad13,\qquad22.
\]
The representatives \(4\) and \(13\) survive, whereas \(22\) is
discarded because its newly exposed repunit digit is \(2\).

This provides a compact hand-check of the search algorithm before it is
implemented at large depth.
\end{example}

\section{Block decomposition beyond one digit}

The one-digit lift law is the smallest instance of a broader block calculation. Let $a=3^h$ and write
\[
s=aq+r,
\qquad0\le r<a.
\]
The block identity \eqref{eq:block} gives
\begin{equation}
\Rep_s(4)
=\Rep_a(4)\Rep_q(4^a)+4^{aq}\Rep_r(4).
\label{eq:block4}
\end{equation}
Because $a$ is a power of $3$, both $\Rep_a(4)$ and $4^a$ have controlled $3$-adic proximity to $0$ and $1$, respectively. In particular,
\[
\vthree(\Rep_{3^h}(4))=h,
\qquad
4^{3^h}\equiv1\pmod{3^{h+1}}.
\]
Thus a block of $h$ exponent digits naturally controls a corresponding block of output precision.

This offers a family of implementation strategies. One may lift one digit at a time, as in \cref{thm:lift}; or group several ternary exponent digits and precompute block transitions. The asymptotic number of surviving states remains $2^m$ at depth $m$, so blocking does not change the fundamental exponential search tree. Its value is architectural: vectorised transitions, cache locality, parallel search, and the possibility of attaching stronger global filters to whole residue blocks.

\subsection{Polynomial screening}

The truncated repunit expansion
\[
\Rep_s(4)\equiv
s+3\binom{s}{2}+3^2\binom{s}{3}+\cdots+3^{m-1}\binom{s}{m}
\pmod{3^m}
\]
provides a second exact way to compute the first $m$ ternary digits from $s$. For fixed $m$, this is a polynomial map on residue classes. The permutation theorem shows that despite its increasingly complicated carry structure, this polynomial induces a bijection modulo $3^m$.

A useful research question is whether the binomial form can support estimates unavailable from the digit tree alone: for example, bounds on carry propagation, correlations between distant output digits, or restrictions when $s$ is constrained to a short Archimedean interval. Such estimates would begin to couple the local $3$-adic coordinate with ordinary size.

\begin{example}[Computing a suffix without constructing \(4^{12}\)]
Suppose that only the last three ternary digits of \(\Rep_{12}(4)\) are
required. The truncated polynomial gives
\[
\Rep_{12}(4)
\equiv
12
+
3\binom{12}{2}
+
9\binom{12}{3}
\pmod{27}.
\]
Therefore
\[
\Rep_{12}(4)
\equiv
12+198+1980
=
2190
\equiv3
\pmod{27}.
\]
Since
\[
3=(010)_3,
\]
the final three ternary digits are \(010\).

The large integer \(4^{12}\) never needs to be constructed. This is the
practical meaning of polynomial screening: finite ternary precision can
be obtained from a polynomial in the exponent residue.
\end{example}

\section{Consequences of the framework}

The preceding theorems give several immediate structural conclusions.

\subsection{No finite suffix obstruction can resolve the conjecture}

For every \(m\geq 1\), there are exactly \(2^m\) exponent classes modulo \(3^m\) for which the least significant \(m\) ternary digits of \(R_s(4)\) all lie in \(\{0,1\}\). Moreover, by corollary \ref{cor:binary-tree}, every admissible class has exactly two admissible children at the next depth.\\
Consequently, the finite-depth admissibility tree never terminates. There is therefore no fixed depth \(m\) at which all residue classes are eliminated, and no argument based solely on checking a bounded number of trailing ternary digits can resolve the Erdős conjecture.

What remains unresolved is a genuinely global question: whether an ordinary non-negative integer \(s\) can satisfy the digit restriction throughout the complete ternary expansion of \(R_s(4)\), rather than merely through any prescribed finite suffix.

\subsection{The exact local density is already determined}

The finite-depth density is exactly $(2/3)^m$. Therefore a heuristic treating successive admissibility constraints as reducing the class count by a factor $2/3$ happens to predict the correct count, but not because the digits are independent. The exact reason is that $\Phi_m$ is a permutation and the target set has cardinality $2^m$.

\subsection{Sparse-support results impose a global threshold}

If the sharp conjecture were false, a counterexample $2^n$ would be a sum of distinct powers of $3$. Dimitrov and Howe prove that any counterexample beyond $1,4,256$ would require at least twenty-six non-zero ternary digits \cite{DimitrovHowe2025}. Since
\[
2^{2s}=3R_s(4) + 1,
\]
the terminal digit \(1\) of \(2^{2s}\) is contributed by the added constant term, while the remaining ternary digits are those of \(R_s(4)\), shifted one place to the left. Hence any new counterexample would force \(R_s(4)\) to contain at least twenty-five \(1\)-digits. This is a genuinely global restriction that is invisible from any bounded ternary suffix.

\begin{example}[From ternary support to an ordinary size restriction]
Suppose a new counterexample requires at least twenty-six non-zero
ternary digits in \(2^{2s}\). Since
\[
2^{2s}=3\Rep_s(4) + 1,
\]
this forces at least twenty-five \(1\)-digits in \(\Rep_s(4)\).

The smallest non-negative integer containing twenty-five \(1\)-digits
in base \(3\) is obtained by placing them in the lowest twenty-five
positions:
\[
1+3+\cdots+3^{24}
=
\frac{3^{25}-1}{2}.
\]
Consequently any such repunit must satisfy
\[
\Rep_s(4)
\ge
\frac{3^{25}-1}{2}.
\]
Using
\[
4^s=1+3\Rep_s(4),
\]
one obtains
\[
4^s
\ge
1+\frac{3(3^{25}-1)}{2}
=
\frac{3^{26}-1}{2}.
\]
Therefore
\[
s
\ge
\left\lceil
\log_4\!\left(\frac{3^{26}-1}{2}\right)
\right\rceil
=
21.
\]

This elementary bound is small compared with existing computational
verification, but it illustrates an important principle: a global
support theorem can be converted into an ordinary-height restriction.
\end{example}

\section{Further Research}

The framework is intended to sharpen the next mathematical question rather than obscure it. 

\subsection{Target A: an Archimedean escape theorem}

A direct route would be to prove that there exists $S_0$ such that every ordinary integer $s>S_0$ has
\[
\Phi(s)=\Rep_s(4)\notin\Cantor.
\]
Such a theorem must use more than residue cylinders because every cylinder contains infinitely many ordinary integers.

Possible forms of a missing global estimate include:
\begin{itemize}
  \item a lower bound on the number of ternary digit $2$s in $\Rep_s(4)$ as a function of $s$;
  \item a carry-propagation theorem for the binomial expansion \eqref{eq:base4poly};
  \item a Diophantine approximation inequality incompatible with long or complete membership in $\Cantor$;
  \item an entropy or discrepancy statement strong enough to hold for each sufficiently large exponent rather than only on average.
\end{itemize}

The last distinction is exactly the gap highlighted by Dupuy and Weirich: average equidistribution is accessible, whereas a sufficiently strong non-averaged statement would imply the Erd\H{o}s conjecture \cite{DupuyWeirich2016}.

\begin{example}[What an Archimedean escape theorem would need to prove]
Consider the depth-\(4\) fibre
\[
s=4+81q.
\]
Every member of this fibre has the same four trailing repunit digits as
the genuine solution \(s=4\).

For \(q=0\),
\[
s=4,
\]
and
\[
\Rep_4(4)=85=(10011)_3,
\]
which is globally admissible.

For \(q=1\),
\[
s=85.
\]
The repunit still possesses the same protected depth-\(4\) suffix, but
its complete ternary expansion contains forbidden digits \(2\).

An Archimedean escape theorem would establish uniformly that, once
\(q\) is sufficiently large, some ternary digit beyond the protected
four-digit suffix must equal \(2\). This is exactly the type of
information that purely local residue lifting cannot provide.
\end{example}

\subsection{Target B: block filters that couple both metrics}

A computationally oriented theorem target is to attach an Archimedean interval to each depth-$m$ fibre
\[
s=\sigma_m(c)+3^mq
\]
and prove that beyond a controlled range of $q$ some more significant ternary digit of $\Rep_s(4)$ must equal $2$. The block identity \eqref{eq:block4} is naturally suited to this because it separates the bounded seed $r$ from the large quotient $q$.

The challenge is that $4^{3^m}$ is extremely close to $1$ in the $3$-adic metric while enormous in the ordinary metric. This tension is precisely the kind of local-global interaction a successful proof must exploit.

\begin{example}[One quantity, two very different metric behaviours]
Take
\[
s=9q+4.
\]
For \(q=1\), one has \(s=13\), and the block identity gives
\[
\Rep_{13}(4)
=
\Rep_9(4)\Rep_1(4^9)
+
4^9\Rep_4(4).
\]
Now
\[
\Rep_9(4)=87381,
\]
\[
4^9=262144,
\]
and
\[
\Rep_4(4)=85.
\]
Therefore
\[
\Rep_{13}(4)
=
87381+262144\cdot85
=
22369621.
\]

On the other hand,
\[
4^9=262144\equiv1\pmod{27}.
\]
Thus \(4^9\) is extremely simple from the depth-\(3\) \(3\)-adic
viewpoint while already being very large in the ordinary metric.

This is precisely the local-global tension that an effective block
filter would have to exploit.
\end{example}

\subsection{Target C: finite automata plus height}

Abram and Lagarias show that intersections of multiplicative translates of the $3$-adic Cantor set can be described by finite automata \cite{AbramLagarias2014}. The repunit conjugacy suggests asking whether selected additional constraints on $s$ or $\Phi(s)$ can be encoded by an automaton whose accepted $3$-adic paths admit a useful Archimedean height estimate. Automata alone will retain infinite paths; the additional height mechanism is essential.

\begin{example}[Why an automaton needs a height coordinate]
Suppose an automaton follows the successive admissible target suffixes
\[
1,\qquad
11,\qquad
111,\qquad
1111.
\]
Every prefix is accepted, and the associated least exponent
representatives are
\[
1,\qquad
4,\qquad
13,\qquad
40.
\]

From a purely automata-theoretic viewpoint this is simply an indefinitely
accepted path. The automaton has no local reason to reject it.

A height annotation, however, records that the least ordinary
representative grows while the residue information becomes increasingly
precise. The intended research question is whether some additional
quantity - ordinary magnitude, ternary digit length, support count or
Diophantine height - can eventually become incompatible with indefinite
acceptance.

The automaton therefore supplies the local language but not, by itself,
the global obstruction.
\end{example}

\subsection{Target D: integrate sparse-support thresholds}

Dimitrov-Howe's result supplies a hard lower bound on the Hamming weight of any new ternary counterexample \cite{DimitrovHowe2025}. It is natural to enrich the residue tree with a running lower bound on the number of forced $1$s and combine it with size estimates for $4^s$. Even if this does not prove the full conjecture, it may produce rigorous exclusions for structured subfamilies of exponent fibres.

A further possible direction is to study the carry corrections $\kappa_m$ arising in the triangular digit representation (equation ~\ref{eq:triangular}). Although finite-depth admissibility exhibits no additional scarcity beyond the exact factor $(2/3)^m$, correlations among these correction terms across separated scales may encode information that is specific to ordinary integer exponents rather than arbitrary $3$-adic paths. Such correlations could therefore provide a mechanism for coupling the exact local residue structure to genuinely global restrictions. We do not pursue this direction further here.

\section{Discussion}

The repunit reduction changes the perspective in three ways.

First, it removes the superficial asymmetry between a power of two and a sum of powers of three. After the forced parity reduction, the problem concerns a single recurrence
\[
R_{s+1}=4R_s+1,
\qquad R_0=0,
\]
with the requirement that every ternary digit of $R_s$ lie in $\{0,1\}$. The target set is the standard ternary Cantor set; the orbit is an affine recurrence.

Secondly, the map from exponent residues to repunit residues is as regular as possible: it is a permutation at every $3$-power modulus. This fact explains exactly why two of three lifts survive at each digit. It also means there is no purely local congruence scarcity waiting to be discovered; thus any proof must incorporate information not contained in finite-depth \(3\)-adic suffix admissibility, naturally pointing towards constraints involving ordinary size or other global structure.

Thirdly, the construction places computation and proof in different roles. Deep recursion is excellent for verification and for discovering record admissible suffixes, as demonstrated at extraordinary depth by Saye \cite{Saye2022}. But the infinite binary tree proves that further depth alone cannot become a proof. The research task is to find a theorem that cuts across the tree using ordinary magnitude, sparse-support constraints, or a non-averaged distribution principle.

This separation is valuable for a foundational programme. It identifies which statements are exact, which are computational, and where the unsolved content begins.

\section{Conclusion}

The Erd\H{o}s ternary-digit challenge can be reformulated exactly through the base-$4$ repunit
\[
\Rep_s(4)=\frac{4^s-1}{3}.
\]
Every admissible power of two has even exponent $n=2s$, and digit omission for $2^{2s}$ is equivalent to digit omission for $\Rep_s(4)$. The general block identity and finite $p$-adic expansion required for this coordinate are proved explicitly in proposition \ref{prop:repunit-toolkit} and \cref{thm:padic-toolkit}.

At every depth $m$, the repunit residue map $\Phi_m(s)=\Rep_s(4)\bmod3^m$ is a permutation. Consequently, each target suffix has a unique canonical exponent seed modulo $3^m$, its full source set is an arithmetic fibre, and the admissible suffixes determine exactly $2^m$ classes. The block lift
\[
\Rep_{r+q3^m}(4)\equiv\Rep_r(4)+q3^m\pmod{3^{m+1}}
\]
explains the three-to-two branching law and recovers Saye's trailing-digit recursion in repunit coordinates.\\
The inverse-limit map \(\Phi:\mathbb Z_3\to\mathbb Z_3\) is an isometric homeomorphism and conjugates \(x\mapsto x+1\) with \(y\mapsto 4y+1\).\\
This is the central conceptual outcome: the local \(3\)-adic problem is completely populated and exactly recursive, whereas the unresolved content is global. A complete proof must explain why, among ordinary non-negative integers \(s\), only \(s=0,1,4\) produce repunits \(R_s(4)\) whose full ternary expansions contain no digit \(2\). The framework therefore supplies a rigorous finite-depth structure, a reproducible computational architecture, and a precise boundary for future theorem development, but it does not resolve the Erdős conjecture.

\appendix
\section{A compact computational algorithm}

A depth-first enumeration of admissible seeds may be stated as follows.

\begin{verbatim}
AdmissibleSeeds(M):
    states := {(0, 0)}   # (depth m, seed r modulo 3^m)
    leaves := {0}
    for m from 0 to M-1:
        next := empty set
        for r in leaves:
            d := floor(R_r(4) / 3^m) mod 3
            for t in {0,1,2}:
                if (d + t) mod 3 is in {0,1}:
                    next.add(r + t*3^m)
        leaves := next
    return leaves
\end{verbatim}

A modular implementation need not construct $R_r(4)$ in full. It is enough to maintain $R_r(4)\bmod3^{m+1}$, or to compute it by the binary power-and-sum recurrence given in \cref{app:binary}. The invariant after level $m$ is that \texttt{leaves} contains exactly the $2^m$ representatives of $\Adm_m$ in $[0,3^m)$.

\section{A depth-$2$ permutation example}

Modulo $9$, the full map $s\mapsto\Rep_s(4)$ is already non-trivial but bijective:
\[
\begin{array}{c|ccccccccc}
(s)_3&00&01&02&10&11&12&20&21&22\\
\hline
(\Rep_s(4)\bmod9)_3&00&01&12&10&11&22&20&21&02.
\end{array}
\]
The four admissible output residues $00,01,10,11$ correspond to the four exponent classes $00,01,10,11$ at this depth, but this superficial alignment disappears at the next level; for example the output suffix $010$ has seed $s=12=(110)_3$.

\section{Binary power-and-sum evaluation}\label{app:binary}

For completeness, the modular repunit values used by the search may be obtained in logarithmic recursion depth without division. For a fixed base $B$, write
\[
P_n:=B^n,
\qquad
S_n:=\Rep_n(B).
\]
Then
\begin{align}
P_{2k}&=P_k^2,
& S_{2k}&=S_k(1+P_k), \label{eq:binary-even}\\
P_{2k+1}&=B P_k^2,
& S_{2k+1}&=S_k(1+P_k)+P_k^2. \label{eq:binary-odd}
\end{align}
Indeed, \eqref{eq:binary-even} follows by splitting the geometric sum into two blocks of length $k$, while \eqref{eq:binary-odd} appends the term $B^{2k}=P_k^2$. Reducing every intermediate product modulo a chosen modulus $M$ gives $B^n\bmod M$ and $\Rep_n(B)\bmod M$ using $O(\log n)$ recursive levels. This is sufficient for all finite-depth computations in the paper.



\begin{thebibliography}{99}

\bibitem{Erdos1979}
P.~Erd\H{o}s,
``Some Unconventional Problems in Number Theory,''
\textit{Mathematics Magazine},
vol.~52, no.~2, pp.~67-70, 1979.
doi: 10.2307/2689842.

\bibitem{Gupta1978}
H.~Gupta,
``Powers of 2 and Sums of Distinct Powers of 3,''
\textit{Publikacije Elektrotehni\v{c}kog fakulteta. Serija Matematika i fizika},
no.~602-633, pp.~151-158, 1978.

\bibitem{Narkiewicz1980}
W.~Narkiewicz,
``A Note on a Paper of H. Gupta Concerning Powers of 2 and 3,''
\textit{Publikacije Elektrotehni\v{c}kog fakulteta. Serija Matematika i fizika},
no.~678-715, pp.~173-174, 1980.

\bibitem{Lagarias2009}
J.~C.~Lagarias,
``Ternary Expansions of Powers of 2,''
\textit{Journal of the London Mathematical Society},
vol.~79, no.~3, pp.~562-588, 2009.
doi: 10.1112/jlms/jdn080.

\bibitem{AbramLagarias2014}
W.~C.~Abram and J.~C.~Lagarias,
``Intersections of Multiplicative Translates of 3-adic Cantor Sets,''
\textit{Journal of Fractal Geometry},
vol.~1, no.~4, pp.~349-390, 2014.
doi: 10.4171/JFG/11.

\bibitem{DupuyWeirich2016}
T.~Dupuy and D.~E.~Weirich,
``Bits of $3^n$ in Binary, Wieferich Primes and a Conjecture of Erd\H{o}s,''
\textit{Journal of Number Theory},
vol.~158, pp.~268-280, 2016.
doi: 10.1016/j.jnt.2015.05.022.

\bibitem{Saye2022}
R.~I.~Saye,
``On Two Conjectures Concerning the Ternary Digits of Powers of Two,''
\textit{Journal of Integer Sequences},
vol.~25, no.~3, Article 22.3.4, 2022.
\url{https://cs.uwaterloo.ca/journals/JIS/VOL25/Saye/saye3.html}.

\bibitem{Aliyev2023}
Y.~N.~Aliyev,
``Digits of Powers of 2 in Ternary Numeral System,''
\textit{Notes on Number Theory and Discrete Mathematics},
vol.~29, no.~3, pp.~474-485, 2023.
doi: 10.7546/nntdm.2023.29.3.474-485.

\bibitem{DimitrovHowe2025}
V.~S.~Dimitrov and E.~W.~Howe,
``Powers of 3 with Few Nonzero Bits and a Conjecture of Erd\H{o}s,''
\textit{Rocky Mountain Journal of Mathematics},
vol.~55, no.~1, pp.~45-61, 2025.
doi: 10.1216/rmj.2025.55.45.

\bibitem{Bloom406}
T.~F.~Bloom,
``Erd\H{o}s Problem \#406,''
2026.
\url{https://www.erdosproblems.com/406}.
Accessed 21 September 2026.

\bibitem{HardyWright2008}
G.~H.~Hardy and E.~M.~Wright,
\textit{An Introduction to the Theory of Numbers},
6th ed.
Oxford: Oxford University Press, 2008.

\bibitem{IrelandRosen1990}
K.~Ireland and M.~Rosen,
\textit{A Classical Introduction to Modern Number Theory},
2nd ed.
New York: Springer, 1990.

\end{thebibliography}
\end{document}